\documentclass[10pt]{article}

\usepackage[letterpaper,top=0.5in, bottom=1in, left=0.8in, right=0.8in, headsep=0.in, centering]{geometry}
\usepackage[utf8]{inputenc}

\usepackage{amsmath}
\usepackage{amsfonts}
\usepackage{amssymb}
\usepackage{graphicx}
\usepackage{enumitem}
\usepackage{amsthm}
\usepackage{caption} 
\usepackage{array}
\usepackage{color}
\usepackage{kotex}
\usepackage{csquotes}
\usepackage{bookmark}
\usepackage{float}
\usepackage{multirow}

\MakeOuterQuote{"}

\makeatletter
\def\thickhline{%
  \noalign{\ifnum0=`}\fi\hrule \@height \thickarrayrulewidth \futurelet
   \reserved@a\@xthickhline}
\def\@xthickhline{\ifx\reserved@a\thickhline
               \vskip\doublerulesep
               \vskip-\thickarrayrulewidth
             \fi
      \ifnum0=`{\fi}}
\makeatother
\captionsetup[table]{position=bottom}
\newlength{\thickarrayrulewidth}
\setlength{\thickarrayrulewidth}{2\arrayrulewidth}

\captionsetup[table]{skip=3pt}

\numberwithin{equation}{section}

\newcommand\spd[1][.5]{\mathbin{\vcenter{\hbox{\scalebox{#1}{$\wedge$}}}}}

\newcommand\innerprd[1][.5]{\mathbin{\vcenter{\hbox{\scalebox{#1}{$\bullet$}}}}}

\newcommand{\RN}[1]{%
  \textup{\uppercase\expandafter{\romannumeral#1}}%
}

\newcommand\Gnm[1]{\left\lfloor #1 \right\rceil}
\newcommand\btr[0]{\scalebox{1.2}{$\blacktriangle$}}
\newcommand\sgn[0]{\text{sgn}}
\newcommand\pht[1]{\text{pht}_{#1}}

\DeclareMathOperator*{\esssup}{ess\,sup}


\newtheorem{theorem}{Theorem}[section]
\newtheorem{lemma}[theorem]{Lemma}
\newtheorem{proposition}[theorem]{Proposition}

\theoremstyle{definition}
\newtheorem{definition}[theorem]{Definition}
\newtheorem{example}[theorem]{Example}
\newtheorem{remark}[theorem]{Remark}

\title{
The wave equation with specular derivatives}

\author{
Kiyuob Jung\thanks{Department of Mathematics, Kyungpook National University, Daegu, 41566, Republic of Korea, E-mail:~kyjung2357@knu.ac.kr},
Jehan Oh\thanks{Department of Mathematics, Kyungpook National University, Daegu, 41566, Republic of Korea, E-mail:~jehan.oh@knu.ac.kr}}

\begin{document}
\date{}
\maketitle

\begin{abstract}
In this paper, we construct the transport equation and the wave equation with specular derivatives and solve these equations in one-dimension. 
To solve these equations, we introduce new function spaces, which we term specular spaces, consisting of certain specularly differentiable functions. 

\end{abstract}

\smallskip

{\bf  Key words}: generalization of derivatives, tangent hyperplanes, the wave equation

\smallskip

{\bf AMS Subject Classifications}: 26A06, 26A24, 26A27, 26B05, 26B12, 34K39, 35L05

\section{Introduction}

The classical derivative can be generalized in many ways and generalized derivatives can be applied to deal with differential equations.
In this sense, a deeper analysis of generalized derivatives allows a deeper analysis of differential equations, including approximation, regularity, convergence, functional analysis, and so on.
This paper investigates a generalized derivative, so-called a specular derivative, and its application partial differential equations: the transport equation and the wave equation.

As for generalization of derivatives, \cite{1994_Bruckner} and \cite{1966_Bruckner} can be extensive and well-organized references.
We look over some generalized derivatives: symmetric derivatives, weak derivatives, subderivatives, and specular derivatives.
First, symmetric derivatives was first introduced by Aull \cite{1967_Aull}.
Quasi-Rolle's Theorem and Quasi-Mean Value Theorem for symmetric derivatives have preponderantly studied by some authors (see \cite{1983_Larson}, \cite{2011_Sahoo}, and \cite{1998_Sahoo}).
Calculations for symmetric derivatives are comfortable; symmetric derivatives can be calculated as the arithmetic mean of right and left derivatives.
However, symmetric differentiability allows some blowing up functions, including $f(x)=1/|x|$ for $x\in \mathbb{R}$.
Second, the concept of weak derivatives started from distributions (see \cite{2013_Bressan}) and is closely related with integrability (see \cite{2010_Evans}).
The Chain Rule, linearity, and other properties from classical derivatives still work in the weak derivative sense.
Third, geometry interpretation for subderivatives is intuitive and can be used to solve some partial differential equations (see \cite{2010_Evans}).
Finally, another new generalized derivative, which includes these advantages of generalized derivatives, was first introduced and scrutinized by Jung and Oh \cite{2022_Jung}.
In \cite{2022_Jung}, the second form of the Fundamental Theorem of Calculus, Quasi-Mean Value Theorem, tangent hyperplanes, and differential equations are addressed in light of specular derivatives.

A way to apply generalized derivatives is to construct and solve differential equations as in \cite{2013_Bressan} and \cite{2010_Evans}.
In particular, we refer to \cite{2011_Han_BOOK} for walk-through solving wave equations in the classical derivative sense.
In many cases, studying partial differential equations coincides with studying properties of a generalized derivative and its function spaces.  
Hence, we try to apply such approach to deal with the wave equation in light of specular derivatives.
In \cite{2022_Jung}, the special case of the transport equation with specular derivatives was only constructed and solved.
This paper aims to deepen this discussion to deal with the wave equation in the specular derivative sense. 

Now, we construct the Laplace's equation and the wave equation with specular derivatives.
However, we are interested in the latter.
Throughout this paper, $x = (x_1, x_2, \ldots, x_n) \in \mathbb{R}^{n}$ denotes a special variable, and $t\geq 0$ denotes a time variable.
For a function $u:\mathbb{R}^{n} \to \mathbb{R}$ with $u=u(x)$, the usual Laplacian $\triangle$ is defined on $u$ by 
\begin{equation*} 
  \triangle u = \sum_{i=1}^{n} \frac{\partial^2 u}{\partial x_i^2}
\end{equation*}
and we define the ($n$-\emph{dimensional}) \emph{Laplacian with specular derivatives} $\btr$ on $u$ by 
\begin{equation*} 
  \btr u := \sum_{i=1}^{n} \frac{\partial^2 u}{\partial^S x_i^2}.
\end{equation*}
For a function $u:\mathbb{R}^{n} \times \mathbb{R} \to \mathbb{R}$ with $u=u(x, t)$, the usual d'Alembertian $\square$ is defined on $u$ by 
\begin{equation*} 
  \square u = u_{tt} - \triangle u
\end{equation*}
and we also define the ($n$-\emph{dimensional}) \emph{d'Alembertian with specular derivatives} $\blacksquare$ on $u$ by 
\begin{equation*} 
  \blacksquare u := \partial^S_{tt} u - \btr u,
\end{equation*}
where 
\begin{equation*} 
  \partial^S_{tt} u := \frac{\partial^2 u}{\partial^S t^2}.
\end{equation*}
In this paper, we investigate the \emph{homogeneous wave equation with specular derivatives} (\emph{in $n$-dimensions})
\begin{equation} \label{PDE: homo wave eq. w/ spd} 
  \blacksquare u = 0
\end{equation}
and the \emph{nonhomogeneous wave equation with specular derivatives} (\emph{in $n$-dimensions})
\begin{equation} \label{PDE: nonhomo wave eq. w/ spd}
  \blacksquare u = f,
\end{equation}
subject to appropriate initial and boundary conditions, where $\Omega$ is an open set in $\mathbb{R}^{n}$, the function $u : \overline{\Omega} \times [0, \infty) \to \mathbb{R}$ with $u=u(x, t)$ is unknown and the function $f:\Omega \times [0, \infty) \to \mathbb{R}$, called the \emph{force} (\emph{term}) of the equation, is given.
Some authors call the source the \emph{nonhomogenoues term} or the \emph{source} (\emph{term}).

Here are our main results.
The first form of the fundamental Theorem of Calculus with specular derivatives is stated and proved. 
We define a function space, which we call a \emph{specular space}, consisting of specularly differentiable functions satisfying two certain conditions.
As for regularity, classical differentiability and specular differentiability are related inductively.
We prove that the specular gradient analogously is related with the vector perpendicular to the surface of a function if the existence of the strong specular tangent hyperplane is guaranteed.
Green's Theorem with specular derivatives in two-dimensions is stated and proved. 
As for differential equations, we construct the transport equation and the wave equation with specular derivatives. 
We only address with above equations in one-dimension and solve homogeneous transport equation, homogeneous wave equation, and nonhomogeneous wave equation in infinite domain.
Initial conditions and a boundary condition are considered.
All solutions are equal with that of differential equations with classical derivatives. 

The rest of the paper is organized as follows.
In Section 2, we recall and the specular derivative's properties stated in \cite{2022_Jung}.
Add to this we state and prove the first form of the Fundamental Theorem of Calculus with specular derivatives.
Also, we extend the concept of piecewise continuity into high-dimensional space $\mathbb{R}^{n}$.
In Section 3, we define the specular space collecting certain specular derivatives and scrutinize its properties. 
The regularity of the specular space in terms of continuous function spaces is stated and proved.
In Section 4, we construct the one-dimensional transport equation and the wave equation in light of specular derivatives and solve them in infinite domain.

\section{Preliminaries}

Suppose $\Omega$ is an open set in $\mathbb{R}^{n}$ and $u:\Omega \to \mathbb{R}$ is a function.
Let $i \in \mathbb{N}$ be an index with $1 \leq i \leq n$.
Denote $e_i$ be the $i$-th standard basis vector of $\mathbb{R}^{n}$.
The (first order) right and left partial derivative of $u$ at $x$ with respect to $x_i$ are defined as the limits 
\begin{equation*} 
  \partial_{x_{i}}^+ u(x):=\lim _{h \searrow 0} \frac{u\left(x + h e_i\right)-u(x)}{h} 
  \qquad \text{and} \qquad
  \partial_{x_{i}}^- u(x):=\lim _{h \nearrow 0} \frac{u\left(x + h e_i\right)-u(x)}{h}
\end{equation*}
as a real number, respectively.
Write 
\begin{equation*} 
  u[x)_{(i)} := \lim _{h \searrow 0} u(x + h e_i) 
  \qquad \text{and} \qquad
  u(x]_{(i)} := \lim _{h \nearrow 0} u(x + h e_i)
\end{equation*}
if each limit exists.
Defining $u[x]_{(i)} := \frac{1}{2}\left(u[x)_{(i)} + u(x]_{(i)} \right)$, write 
\begin{equation*} 
    \overline{x}_{(i)} := (a, u[x]_{(i)}),
\end{equation*}
where $1 \leq i \leq n$.
In particular, if $u[x]_{(1)}=u[x]_{(2)}=\cdots=u[x]_{(n)}$, we write the common value as $u[x]$.

The (first order) right and left specularly partial derivative of $u$ at $x$ with respect to $x_i$ are defined as the limits 
\begin{equation*} 
  \partial_{x_{i}}^{R} u(x):=\lim _{h \searrow 0} \frac{u\left(x + h e_i\right)-u[x)_{(i)}}{h} 
  \qquad \text{and} \qquad
  \partial_{x_{i}}^{L} u(x):=\lim _{h \nearrow 0} \frac{u\left(x + h e_i\right)-u(x]_{(i)}}{h}
\end{equation*}
as a real number, respectively.
In particular, $u$ is (first order) semi-specularly partial differentiable at $x$ if there exist both $\partial_{x_{i}}^{R} u(x)$ and $\partial_{x_{i}}^{L} u(x)$.
If $n=1$, we write $\partial_{x_{i}}^{R}u$ and $\partial_{x_{i}}^{L}u$ as $u^{\spd}_+$ and $u^{\spd}_-$ and call these the right and left specular derivatives, respectively.
Also, $u$ is (first order) semi-specularly differentiable at $x$ if there exist both $u^{\spd}_+(x)$ and $u^{\spd}_-(x)$.

The section $\Omega_{x_{i}}(x)$ of $\Omega$ of $u$ by $x$ with respect to $x_i$ is the set 
\begin{equation*} 
  \Omega_{x_{i}}(x) =\left\{y \in \Omega : y  e_j = x  e_j \text { for all } 1 \leq j \leq n \text { with } j \neq i\right\}.
\end{equation*}
If $u$ is semi-specularly partial differentiable at $x$ with respect to $x_i$, a phototangent $\pht{x_i} u:\mathbb{R}^{n}_{x_i}(x) \to \mathbb{R}$ of $u$ at $x$ with respect to $x_i$ is defined by 
\begin{equation*}
\pht{x_i} u(y)= 
\begin{cases}
  \partial_{x_{i}}^{L} u(x)(y  e_i-x  e_i) + u(x]_{(i)} 
  & \text {if } y  e_i<x  e_i, \\ 
  u[x]_{(i)} 
  & \text {if } y  e_i=x  e_i, \\ 
  \partial_{x_{i}}^{R} u(x)(y  e_i-x  e_i) + u[x)_{(i)} 
  & \text {if } y  e_i>x  e_i,
\end{cases}
\end{equation*}
for $y \in \mathbb{R}^{n}_{x_i}(x)$.
If $n=1$, we denote the phototangent briefly by $\pht{} u = \pht{} u(y)$.

Write $\mathcal{V}(u, x)$ to be the set containing all indices $i$ of variables $x_i$ such that $u$ is semi-specularly partial differentiable at $x$ with respect to $x_i$ for each $1 \leq i \leq n$, and $\mathcal{P}(u, x)$ to be the set containing all intersection points of the phototangent of $u$ at $x$ with respect to $x_i$ and a sphere $\partial B(\overline{x}_{(i)}, 1)$ for each $1 \leq i \leq n$.
If $|\mathcal{P}(u, x)| \geq n+1$ and $u[x]_{(i)}=u[x]_{(j)}$ for all $i, j \in \mathcal{V}(u, x)$, we say that $u$ is weakly specularly differentiable at $x$; if $|\mathcal{P}(u, x)|=2 n$ and $u[x]_{(1)}=u[x]_{(2)}=\cdots=u[x]_{(n)}$, we say that $u$ is (strongly) specularly differentiable at $x$.

Recall the following useful formulas to calculate a specular derivatives:
  \begin{enumerate}[label=(F\arabic*), ref=(F\arabic*), start=1]
    \rm\item\label{F1} 
      $\partial^S_{x_i}u(x) =
        \begin{cases} 
          \displaystyle \frac{\alpha_i \beta_i -1 + \sqrt{\left(\alpha_i^2 + 1\right)\left( \beta_i^2 + 1 \right)}}{\alpha_i + \beta_i} & \text{if } \alpha_i + \beta_i \neq 0,\\
          0 & \text{if } \alpha_i + \beta_i = 0;
        \end{cases}
      $
    \rm\item\label{F2} $\displaystyle \partial^S_{x_i}u(x) = \tan\left( \frac{\arctan \alpha_i + \arctan \beta_i}{2} \right)$,
  \end{enumerate}
  where $\alpha_i = \partial_{x_{i}}^{R} u(x)$ and $\beta_i = \partial_{x_{i}}^{L} u(x)$.
  In this paper, we temporarily employ the notation in the formula $\text{\ref{F1}}$:
  \begin{equation} \label{A notation} 
    A(\alpha, \beta) := \frac{\alpha \beta - 1 + \sqrt{(\alpha^2 + 1)(\beta^2 + 1)}}{\alpha + \beta}
  \end{equation}
  for $\alpha$, $\beta \in \mathbb{R}$ with $\alpha + \beta \neq 0$.
  Note that $A(\alpha, \beta) = A(\beta, \alpha)$ for any $\alpha$, $\beta \in \mathbb{R}$ with $\alpha + \beta \neq 0$.

\begin{remark} \label{Rmk : specular derivative with minus variable}
  A simple application of \ref{F1} yields that 
  \begin{equation*} 
    \partial^{S}_{x_i}u(x_1, \ldots, -x_i, \ldots, x_n) = - \partial^{S}_{x_i}u(x)
  \end{equation*}
  for each $1 \leq i \leq n$.
\end{remark}

In the classical derivative sense, the Fundamental Theorem of Calculus (FTC for short) can be sorted into the first form and the second form (see \cite{2011_Bartle_BOOK}).
In \cite{2022_Jung}, only the second form of FTC with specular derivatives was stated.

\begin{theorem} \label{Thm : the second form of FTC with spd}
  \emph{(The second form of the Fundamental Theorem of Calculus with specular derivatives)}
  Suppose $f : [a, b] \to \mathbb{R}$ be a piecewise continuous function.
  Assume 
  \begin{equation} \label{Eq : the second form of FTC with spd}
    f(x) =
    \begin{cases} 
    A(f(x], f[x))& \text{if } f(x] + f[x) \neq 0,\\ 
    0 & \text{if } f(x] + f[x) = 0,
    \end{cases}
  \end{equation}
  for each point $x \in (a, b)$.
  Let $F$ be the indefinite integral of $f$.
  Then the following properties hold:
  \begin{enumerate}[label=(\roman*)] 
  \rm\item $F$ is continuous on $[a, b]$.
  \rm\item $F^{\spd}(x) = f(x)$ for all $x \in [a, b]$.
  \end{enumerate}
\end{theorem}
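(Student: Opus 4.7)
The plan is to prove (i) from the boundedness of the piecewise continuous integrand and then to prove (ii) by computing the one-sided specular derivatives of $F$ directly from the definition and applying formula \ref{F1}.

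For (i), piecewise continuity of $f$ on the compact interval $[a,b]$ forces boundedness: $|f| \le M$ for some constant $M$. Since $F(y) - F(x) = \int_x^y f(t)\,dt$, the standard estimate gives $|F(y) - F(x)| \le M|y - x|$, so $F$ is Lipschitz, and in particular continuous, on $[a,b]$.

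For (ii), I would first show that $F^{\spd}_+(x) = f[x)$ and $F^{\spd}_-(x) = f(x]$ at every interior point $x \in (a,b)$. The crucial observation is that the continuity of $F$ from part (i) collapses the auxiliary limits $F[x)$ and $F(x]$ both to $F(x)$, so the right specular difference quotient reduces to
\[
F^{\spd}_+(x) = \lim_{h \searrow 0} \frac{1}{h}\int_x^{x+h} f(t)\,dt.
\]
Since $f$ is piecewise continuous, the one-sided limit $f[x) = \lim_{t \searrow x} f(t)$ exists, and a routine $\varepsilon$-$\delta$ estimate bounds the difference between this average and $f[x)$. The argument for $F^{\spd}_-(x) = f(x]$ is entirely symmetric. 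With these identifications in hand, formula \ref{F1} together with the symmetry $A(\alpha,\beta) = A(\beta,\alpha)$ yields $F^{\spd}(x) = A(f[x), f(x]) = A(f(x], f[x))$ whenever $f[x) + f(x] \neq 0$, and $F^{\spd}(x) = 0$ otherwise; the hypothesis \eqref{Eq : the second form of FTC with spd} then identifies this value with $f(x)$. The endpoints $x = a$ and $x = b$ are handled by the same computation, using only the one-sided specular derivative available there.

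The main obstacle is the identification $F^{\spd}_+(x) = f[x)$ and $F^{\spd}_-(x) = f(x]$: the definitions of the one-sided specular derivatives use the subtle quantities $F[x)$ and $F(x]$ in the numerators rather than $F(x)$ itself, so without the continuity of $F$ established in (i) the specular difference quotients would not reduce to a familiar integral average. Once (i) is available, this reduction is immediate and the rest is a direct appeal to formula \ref{F1} and the hypothesis on $f$.
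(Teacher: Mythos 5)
The paper does not actually prove this theorem: it is recalled from \cite{2022_Jung} and stated without proof (the only FTC proof given in this paper is for the \emph{first} form, Theorem \ref{Thm : the first form of FTC with spd}). So there is no in-paper argument to compare against, and your proposal must be judged on its own. On that basis it is correct and is the natural argument: boundedness of a piecewise continuous $f$ gives the Lipschitz estimate for $F$ and hence (i); continuity of $F$ collapses $F[x)$ and $F(x]$ to $F(x)$, so the one-sided specular difference quotients become ordinary integral averages, which converge to $f[x)$ and $f(x]$ respectively because those one-sided limits of $f$ exist by piecewise continuity; then \ref{F1} together with $A(\alpha,\beta)=A(\beta,\alpha)$ and the hypothesis \eqref{Eq : the second form of FTC with spd} gives $F^{\spd}(x)=f(x)$. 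You correctly identify the one genuinely non-obvious step, namely that part (i) is what licenses replacing $F[x)$ and $F(x]$ by $F(x)$ in the numerators. The only loose end is at the endpoints: the hypothesis \eqref{Eq : the second form of FTC with spd} is imposed only on $(a,b)$, so your claim that ``the same computation'' works at $x=a$ and $x=b$ yields $F^{\spd}(a)=f[a)$ and $F^{\spd}(b)=f(b]$, which equal $f(a)$ and $f(b)$ only under an implicit one-sided continuity convention at the endpoints; this is a defect inherited from the theorem statement rather than a flaw in your argument, but it deserves one explicit sentence.
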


Note that to guarantee that $F^{\spd}$ equal with $f$ the condition \eqref{Eq : the second form of FTC with spd} is necessary.
However, the condition \eqref{Eq : the second form of FTC with spd} can be dropped in calculating the integration of $f$.
Here, we provide the first form of FTC with specular derivatives and its proof. 

\begin{theorem} \label{Thm : the first form of FTC with spd}
  \emph{(The first form of the Fundamental Theorem of Calculus with specular derivatives)}
  Suppose $f:[a, b] \to \mathbb{R}$ be a piecewise continuous function with a singular set $\mathcal{S}$.
  If there exists $F:[a, b] \to \mathbb{R}$ such that:
  \begin{enumerate}[label=(\roman*)] 
  \rm\item $F$ is continuous on $[a, b]$.
  \rm\item $F^{\spd}(x) = f(x)$ for all $x \in [a, b] \setminus \mathcal{S}$.
  \end{enumerate}
  Then 
  \begin{equation*} 
    \int_a^b f(x) ~dx= F(b) - F(a).
  \end{equation*}  
\end{theorem}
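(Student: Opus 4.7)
My plan is to localize the identity to the continuous pieces of $f$ determined by the singular set $\mathcal S$, express $F(d)-F(c)$ on each subpiece as a Riemann sum by applying the Quasi--Mean Value Theorem for specular derivatives (established in \cite{2022_Jung}), and then glue the pieces back together via continuity of $F$.

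Since $f$ is piecewise continuous with singular set $\mathcal S$, I enumerate $\mathcal S=\{s_1<\cdots<s_k\}$ and set $s_0=a$, $s_{k+1}=b$. On each open interval $(s_{j-1},s_j)$, $f$ is continuous (in particular bounded, with one-sided limits at the endpoints) and satisfies $F^{\spd}=f$ by hypothesis~(ii), while $F$ is continuous throughout $[a,b]$ by hypothesis~(i). Because
\[
  F(b)-F(a)=\sum_{j=1}^{k+1}\bigl(F(s_j)-F(s_{j-1})\bigr),\qquad \int_a^b f\,dx=\sum_{j=1}^{k+1}\int_{s_{j-1}}^{s_j}f\,dx,
\]
it suffices to establish $F(s_j)-F(s_{j-1})=\int_{s_{j-1}}^{s_j}f\,dx$ on each piece; I fix one and write $[\alpha,\beta]=[s_{j-1},s_j]$.

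Now I fix $[c,d]\subset(\alpha,\beta)$ and a partition $c=x_0<x_1<\cdots<x_m=d$. On each $[x_{i-1},x_i]$, $F$ is continuous and $F^{\spd}=f$ on the interior, and every point of the interior lies in $(\alpha,\beta)\subset[a,b]\setminus\mathcal S$, so the Quasi--Mean Value Theorem for specular derivatives yields a point $\xi_i\in(x_{i-1},x_i)$ with $F(x_i)-F(x_{i-1})=f(\xi_i)(x_i-x_{i-1})$. Telescoping,
\[
  F(d)-F(c)=\sum_{i=1}^{m}f(\xi_i)(x_i-x_{i-1}),
\]
a Riemann sum for $\int_c^d f\,dx$. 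As the mesh tends to $0$, the right side converges to $\int_c^d f\,dx$ since $f$ is continuous and hence Riemann integrable on $[c,d]$; the left side is partition-independent, forcing $F(d)-F(c)=\int_c^d f\,dx$.

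Finally, letting $c\searrow\alpha$ and $d\nearrow\beta$, continuity of $F$ gives $F(d)-F(c)\to F(\beta)-F(\alpha)$, while boundedness of $f$ on $[\alpha,\beta]$ gives $\int_c^d f\,dx\to\int_\alpha^\beta f\,dx$, so the identity passes to the full piece. Summing over $j$ then produces $F(b)-F(a)=\int_a^b f\,dx$. The main obstacle I foresee is the exact formulation of the QMVT invoked from \cite{2022_Jung}: if that version only supplies sandwich bounds $(\inf_{[x_{i-1},x_i]}f)(x_i-x_{i-1})\le F(x_i)-F(x_{i-1})\le(\sup_{[x_{i-1},x_i]}f)(x_i-x_{i-1})$ rather than an exact intermediate point, the Riemann-sum step must be replaced by a Darboux-sum squeeze, but the conclusion is reached along the same route.
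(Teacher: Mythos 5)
Your proof is correct, but it reaches the conclusion by a genuinely different route at the key step, even though the outer structure (split $[a,b]$ at the singular points, telescope, glue by continuity of $F$) is the same as the paper's. The paper disposes of each piece $[s_{j-1},s_j]$ by first upgrading the specular derivative to a classical one: since $f$ is continuous on the open piece, it asserts $F^{\spd}=F'=f$ there, extends $f$ by its one-sided limits to a continuous function $\overline{f_j}$ on the closed subinterval, and then simply invokes the classical first form of FTC. You instead stay inside the specular framework: you re-run the Mean Value Theorem proof of FTC using the Quasi--Mean Value Theorem, obtain $F(d)-F(c)=\int_c^d f\,dx$ on compact subintervals $[c,d]$ of each open piece, and pass to the endpoints using continuity of $F$ and boundedness of $f$. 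Your closing caveat is exactly right and is the version you actually need: the Quasi--MVT of Jung and Oh (as used elsewhere in this paper, in the proof of the symmetry of second specular derivatives) is of the sandwich form $\partial^S F(\xi_1)\le\frac{F(x_i)-F(x_{i-1})}{x_i-x_{i-1}}\le \partial^S F(\xi_2)$, so the Riemann-sum step should be phrased as a Darboux squeeze; since $f$ is continuous on each piece this is harmless, and the intermediate value property of $f$ would even recover your exact mean-value form. The trade-off: your argument is more self-contained, avoiding both the classical FTC and the imported regularity fact that a continuous specular derivative of a continuous function coincides with the classical derivative; the paper's argument is shorter but leans on that regularity statement and on the classical FTC with a finite exceptional set.
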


\begin{proof} 
  Let $s_1$, $s_2$, $\ldots$, $s_n$ be singular points in $\mathcal{S}$, where $a < s_1 < s_2 < \ldots < s_n < b$.
  Also, since $f$ is continuous on $[a, b] \setminus \mathcal{S}$, we have 
  \begin{equation*} 
    f(x) = F^{\spd}(x) = F'(x)
  \end{equation*}
  for all $x \in [a, b] \setminus \mathcal{S}$.
  Writing $s_0 := a$ and $s_{n+1} := b$, define the function $\overline{f_i}:[s_i, s_{i + 1}] \to \mathbb{R}$ by 
  \begin{equation*}
  \overline{f_i}(x) =
  \begin{cases} 
  f(x] & \text{if } x = s_{i + 1},\\ 
  f(x) & \text{if } s_i < x < s_{i + 1},\\ 
  f[x) & \text{if } x = s_i, 
  \end{cases}
  \end{equation*}
  for each $i=0, 1, \ldots, n$; note that $\overline{f_i}$ is continuous on $[s_i, s_{i + 1}]$.
  Thanks to the first form of FTC with classical derivatives, we find that 
  \begin{align*}
    \int_a^b f(x) ~dx 
    &= \sum_{i=0}^n \int_{s_i}^{s_{i + 1}} f(x) ~dx \\
    &= \sum_{i=0}^n \int_{s_i}^{s_{i + 1}} \overline{f_i}(x) ~dx \\
    &= \sum_{i=0}^n [F(s_{i + 1}) - F(s_i)] \\
    &= F(b) - F(a),
  \end{align*}
  as required.
\end{proof}

Since we can apply FTC with specular derivatives to a piecewise continuous function, we assume that integrands are piecewise continuous. 
To clarify the piecewise continuity in higher dimensions, we generalize a jump discontinuity in higher dimensions by using the concept of affine hyperplanes.
An affine hyperplane $\hbar$ in a set $X \subset \mathbb{R}^{n}$ is defined by
\begin{equation*} 
  a \innerprd x = b,
\end{equation*}
where $x \in X$ is a variable vector and a non-zero constant vector $a\in \mathbb{R}^{n}$ and a scalar $b\in \mathbb{R}$ are given.
The set 
\begin{equation*} 
  \mathcal{G}(\hbar) := \left\{ x \in X : a \innerprd x = b \right\}
\end{equation*}
is the graph of the affine hyperplane $\hbar$.

\begin{definition}
  Let $\Omega$ be an open set in $\mathbb{R}^n$ and $u:\Omega \to \mathbb{R}$ be a multi-variable function.
  If there exists a hyperplane $\hbar$ in a connected subset of $\Omega$ such that for a variable $x_i$
  \begin{equation*} 
    u(x]_{(i)} \neq u[x)_{(i)}    
  \end{equation*}
  for all $x \in \mathcal{G}(\hbar)$, then we say $u$ is \emph{jump discontinuous} with respect to $x_i$ on $\mathcal{G}(\hbar)$ and call $\hbar$ a \emph{singular hyperplane} of $u$ in $\Omega$.
\end{definition}

This is the generalization of the concept of the usual jump discontinuity in one-dimension.
Indeed, if $n=1$, a singular hyperplane is a point which is equal with its graph.

In two-dimensions, then we also call singular hyperplanes \emph{singular lines} can be written by $a_1x_1 + a_2x_2 = b$.
Also, singular lines with either $a_1 = 0$ or $a_2 = 0$ are parallel to either $x_1$-axis or $x_2$-axis, respectively, while singular lines with $a_1 \neq 0$ and $a_2 \neq 0$ are not parallel to any axises.

Now, we inductively define the concept of piecewise continuity in higher dimensions.

\begin{definition}
  Let $u:\Omega \to \mathbb{R}$ be a function on an open set $\Omega \subset \mathbb{R}^{n}$.
  We say that $u$ is ($n$-\emph{dimensional}) \emph{piecewise continuous} if there exist $m \geq 0$ singular hyperplanes $\hbar_1$, $\hbar_2$, $\ldots$, $\hbar_m$ of $u$ in $\Omega$ satisfying the following conditions:
  \begin{enumerate}[label=(\roman*)] 
  \rm\item $u$ is continuous in $\Omega \setminus \left[ \mathcal{G}(\hbar_1) \cup \mathcal{G}(\hbar_2) \cup \cdots \cup \mathcal{G}(\hbar_m) \right]$;
  \rm\item for each $\ell \in \left\{ 1, 2, \ldots, m \right\}$, the restriction of $u$ to $\mathcal{G}(\hbar_{\ell})$ is continuous on $\mathcal{G}(\hbar_{\ell}) \setminus \mathcal{G}^{\ast}(\hbar_{\ell})$, where 
  \begin{equation*} 
    \mathcal{G}^{\ast}(\hbar_{\ell}) := \mathcal{G}(\hbar_1) \cup \cdots \cup \mathcal{G}(\hbar_{\ell - 1}) \cup \mathcal{G}(\hbar_{\ell + 1}) \cup \cdots \cup \mathcal{G}(\hbar_m).
  \end{equation*}
  \end{enumerate}
\end{definition}

\noindent The definition of $n$-dimensional piecewise continuity is generalization of piecewise continuity in the sense that a singular hyperplane in one-dimension is a singular point at which a function has jump discontinuity.
Note that a continuous function can be considered as a piecewise continuous function with no singular hyperplanes. 

\begin{remark}
  If a multi-variable function $u$ is $n$-dimensional piecewise continuous in $\Omega$, then $u$ is continuous a.e. in $\Omega$ since each graph of singular hyperplanes has measure zero.
  In the following Figure \ref{Fig : Piecewise continuity in one, two, and three dimensions}, each red figure illustrates a singular hyperplanes in one, two, and three dimension.
\end{remark}

\begin{figure}[H] 
  \centering 
  \includegraphics[width=0.9\textwidth]{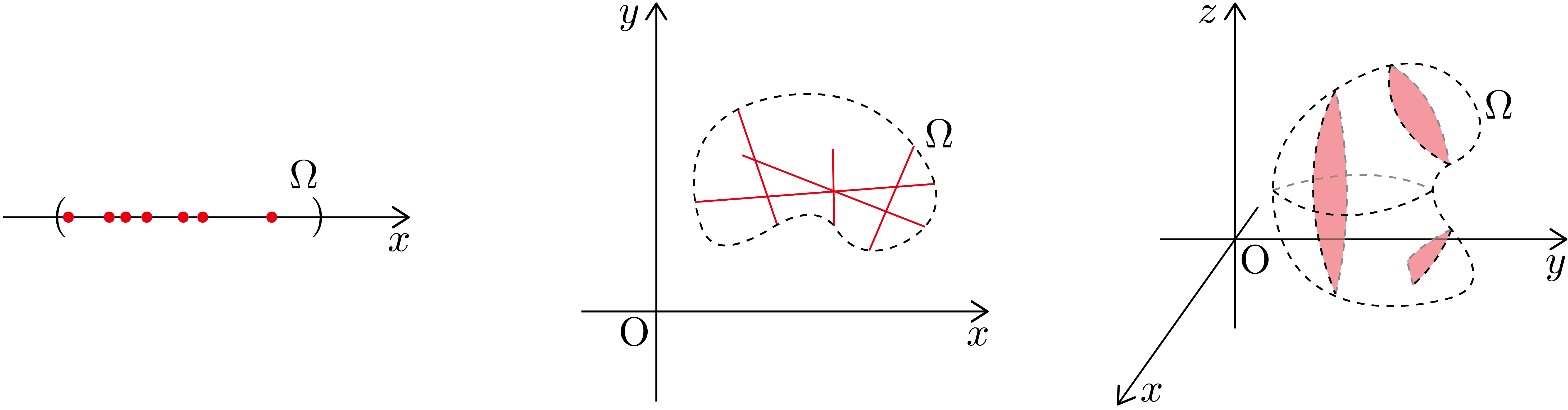} 
  \caption{Piecewise continuity in one, two, and three dimensions}
  \label{Fig : Piecewise continuity in one, two, and three dimensions}
\end{figure}

\begin{example}
  Consider the function $u:\mathbb{R}^{2} \to \mathbb{R}$ defined by 
  \begin{equation} \label{Ex : n-dim p.w.c. function}
    u(x, y) = |2x - y| + |x - 3| =
    \begin{cases} 
     3x - y - 3  & \text{if } 2x - y \geq 0, x - 3 \geq 0,\\ 
     x - y + 3   & \text{if } 2x - y \geq 0, x - 3 < 0,\\ 
     - x + y - 3 & \text{if } 2x - y < 0, x - 3 \geq 0,\\ 
     -3x + y + 3 & \text{if } 2x - y < 0, x - 3 < 0, 
    \end{cases}   
  \end{equation}
  for $(x, y) \in \mathbb{R}^{2}$.
  Then simple calculation yields that 
  \begin{equation*}
    \partial^S_{x} u(x, y)  =
    \begin{cases} 
     3  & \text{if } 2x - y > 0, x - 3 > 0,\\ 
     A(3, 1)  & \text{if } 2x - y > 0, x - 3 = 0,\\ 
     1   & \text{if } 2x - y > 0, x - 3 < 0,\\ 
     A(3, -1)  & \text{if } 2x - y = 0, x - 3 > 0,\\ 
     A(3, -3)  & \text{if } 2x - y = 0, x - 3 = 0,\\ 
     A(1, -3) & \text{if } 2x - y = 0, x - 3 < 0,\\ 
     - 1 & \text{if } 2x - y < 0, x - 3 > 0,\\ 
     A(-1, -3) & \text{if } 2x - y < 0, x - 3 = 0,\\ 
     -3 & \text{if } 2x - y < 0, x - 3 < 0,
    \end{cases}
  \end{equation*}
  using the notation \eqref{A notation}.
  Choose two singular lines $\hbar_1$ and $\hbar_2$: 
  \begin{align*}
    \hbar_1 &:  2x - y = 0, \\
    \hbar_2 &:  x - 3 = 0
  \end{align*}
  for $(x, y) \in \mathbb{R}^{2}$.
  Finally, one can find that $\partial^S_{x} u$ is two-dimensional piecewise continuous in $\mathbb{R}^{2}$.
\end{example}

Now, we can define what a proper integrand is in solving the wave equation with specular derivatives.

\begin{definition} \label{Def : proper integrands}
  Let $u:\Omega \to \mathbb{R}$ be a function on an open set $\Omega \subset \mathbb{R}^{n}$.
  We say that $u$ is \emph{proper} if $u$ satisfies the following conditions:
  \begin{enumerate}[label=(\roman*)] 
  \rm\item $u$ is $n$-dimensional piecewise continuous in $\Omega$;
  \rm\item given a point $a \in \Omega$,
  \begin{equation*}
    u(a) =
  \begin{cases} 
    A\left( u(a]_{(i)}, u[a)_{(i)} \right) & \text{if } u(a]_{(i)} + u[a)_{(i)} \neq 0,\\ 
    0 & \text{if } u(a]_{(i)} + u[a)_{(i)} = 0,
  \end{cases}
  \end{equation*}
  for all $1 \leq i \leq n$.
  \end{enumerate}
  Also, we denote $S^0(\Omega)$ as the set of all proper function $u : \Omega \to \mathbb{R}$.
\end{definition}

\noindent Note that the above second condition stems from the condition \eqref{Eq : the second form of FTC with spd}.
Also, it is clear that a continuous function is proper. 

\begin{remark}
  One can apply FTC with specular derivatives to functions in $S^0$.  
\end{remark}

Since solutions we will drive after includes integrands, we will assume that  wave’s vertical velocity in initial value problems and a given force in nonhomogeneous problems are proper.

\section{The specular space}

Before investigating into the wave equation, we define a function space of specular derivatives and state basic properties of specular derivatives for functional analysis.

To efficiently denote higher-order specular derivatives of a function, we employ a multi-index 
\begin{equation*} 
  \alpha = ( \alpha_1, \alpha_2, \cdots, \alpha_n),
\end{equation*}
where each component $\alpha_i$ is a nonnegative integer for $1 \leq i \leq n$, with order
\begin{equation*} 
  \left\vert \alpha \right\vert = \alpha_1 + \alpha_2 + \cdots + \alpha_n.
\end{equation*}
Here, we suggest the notations for higher-order specular derivatives with multi-index as follows.

\begin{definition}
  Let $u:\Omega \to \mathbb{R}$ be a real-valued function on an open set $\Omega$ in $\mathbb{R}^{n}$ and $x$ be a point in $\Omega$.
  For convenience, write $\left( \partial^S_{x_i} \right)^0 u = u$ for $1 \leq i \leq n$.
  \begin{enumerate}[label=(\alph*)] 
  \rm\item Given multi-index $\alpha$, denote a specularly differential operator of order $\left\vert \alpha \right\vert$ by 
  \begin{equation*} 
    \left(\partial^S\right)^{\alpha} u(x) := \frac{\partial^{\left\vert \alpha \right\vert} u}{\partial^S x_1^{\alpha_1} \partial^S x_2^{\alpha_2} \cdots \partial^S x_n^{\alpha_n}} (x) = \left(\partial^S_{x_1}\right)^{\alpha_1} \left(\partial^S_{x_2}\right)^{\alpha_2} \cdots \left(\partial^S_{x_n}\right)^{\alpha_n} u(x).
  \end{equation*}
  Note that $\left( \partial^S \right)^{(0, 0, \cdots, 0)} u = u$.
  In particular, if $|\alpha| = 2$, we simply write 
  \begin{equation*} 
    \partial^S_{x_i x_j} u := \partial^S_{x_j} \partial^S_{x_i} u
    \qquad \text{and} \qquad
    \partial^S_{x_i x_i} u := \left( \partial^S_{x_i} \right)^{2} u
  \end{equation*}
  for $1 \leq i, j \leq n$.
  \rm\item If $k$ is a nonnegative integer, denote the set of all specular partial derivatives of order $k$ by 
  \begin{equation*} 
    \left( \partial^S \right)^k u(x) := \left\{ \left(\partial^S\right)^{\alpha} u(x) : \left\vert \alpha \right\vert = k \right\}.
  \end{equation*}
  \end{enumerate}
\end{definition}

\subsection{The specular space}

In this subsection, we define the function space for specular derivatives and state its properties.

\begin{definition} \label{Def : the specular space}
  Let $\Omega$ be an open subset of $\mathbb{R}^{n}$ and let $k$ be a natural number.
  Let $\alpha$ be a multi-index with order $|\alpha| \geq 0$.
  We define the \emph{specular space} $S^{k}(\Omega)$ to be the set of all continuous functions $u:\Omega \to \mathbb{R}$ satisfying the following conditions:
  \begin{enumerate}[label=(\roman*)] 
  \rm\item if $| \alpha | \leq k$, then $\left( \partial^S \right)^{\alpha} u$ exists and is in $S^0(\Omega)$; \label{cond. 1}
  \rm\item if $| \alpha | \leq k - 2$, then $\partial^S_{x_i x_j}\left( \partial^S \right)^{\alpha} u$ is continuous in $\Omega$ whenever $i \neq j$. \label{cond. 2}
  \end{enumerate}
\end{definition}

As for the above definition, the piecewise continuity in the condition \ref{cond. 1} eliminates counterexamples such as function in $C^1$-class but not in $C^2$-class (see Example \ref{Ex : C^1 but not in C^2}); 
the condition \ref{cond. 2} is a crucial in proving the symmetry of second specular derivatives (see Theorem \ref{Thm : Symmetry of second specular derivatives in 2d}).

Before we prove the specular space is a vector space, the following lemma is needed.

\begin{lemma} \label{Lem : closedness of specularly differentiablity}
  If $u$, $v$ are specularly differentiable on an open set $\Omega \subset \mathbb{R}$, then $au+bv$ is specularly differentiable on $\Omega$ for all $a, b \in \mathbb{R}$.
\end{lemma}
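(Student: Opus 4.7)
The plan is to reduce specular differentiability of $au+bv$ on $\Omega$ to pointwise existence of its two one-sided specular derivatives, and then exploit the linearity of one-sided limits. Fix $x \in \Omega$. Because $u$ and $v$ are each specularly differentiable at $x$, they are in particular semi-specularly differentiable there, so the four one-sided limits $u[x)$, $u(x]$, $v[x)$, $v(x]$ exist in $\mathbb{R}$ and the four one-sided specular derivatives $u^{\spd}_{\pm}(x)$, $v^{\spd}_{\pm}(x)$ exist as finite numbers. By linearity of ordinary one-sided limits, both
\[
(au+bv)[x) = a\,u[x) + b\,v[x) \qquad \text{and} \qquad (au+bv)(x] = a\,u(x] + b\,v(x]
\]
exist.

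Next I would write the right specular difference quotient of $au+bv$ at $x$ and split it:
\[
\frac{(au+bv)(x+h) - (au+bv)[x)}{h}
= a\cdot\frac{u(x+h) - u[x)}{h} + b\cdot\frac{v(x+h) - v[x)}{h}.
\]
Letting $h \searrow 0$ and using the assumed existence of the right specular derivatives on the right-hand side yields
\[
(au+bv)^{\spd}_{+}(x) = a\,u^{\spd}_{+}(x) + b\,v^{\spd}_{+}(x),
\]
and the same argument with $h \nearrow 0$ gives $(au+bv)^{\spd}_{-}(x) = a\,u^{\spd}_{-}(x) + b\,v^{\spd}_{-}(x)$. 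Thus $au+bv$ is semi-specularly differentiable at $x$.

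Finally, I would promote semi-specular differentiability at $x$ to (strong) specular differentiability in the one-dimensional setting. In $\mathbb{R}$ we have $n = 1$, so the equality condition $u[x]_{(1)} = \cdots = u[x]_{(n)}$ is vacuous and the bilateral specular derivative is computed directly from $u^{\spd}_{+}(x)$ and $u^{\spd}_{-}(x)$ via formula \ref{F1}. Since $x \in \Omega$ was arbitrary, $au+bv$ is specularly differentiable on all of $\Omega$. The only step that requires any care is making sure one is entitled to split the limit in the difference quotient above, which is straightforward once the one-sided limits of $au+bv$ itself have been identified; this is the main (and only) conceptual obstacle, since everything else is a direct application of the linearity of limits.
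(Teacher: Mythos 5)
Your first two steps are correct and are in fact used inside the paper's own argument: linearity of one-sided limits gives $(au+bv)[x)=a\,u[x)+b\,v[x)$ and $(au+bv)(x]=a\,u(x]+b\,v(x]$, and splitting the difference quotient gives $(au+bv)^{\spd}_{\pm}(x)=a\,u^{\spd}_{\pm}(x)+b\,v^{\spd}_{\pm}(x)$. The gap is in your final step. In this paper, specular differentiability at $x$ is \emph{not} the conjunction of semi-specular differentiability with the evaluability of formula \ref{F1}: by definition it additionally requires that the phototangent of the function at $x$ meet the sphere $\partial B(\overline{x},1)$ in exactly $2n=2$ points, and for $n=1$ only the condition $u[x]_{(1)}=\cdots=u[x]_{(n)}$ is vacuous, not this one. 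Your argument never checks it, and the implication you assert is false in general: the function $u(y)=10\cdot\chi_{[0,\infty)}(y)$ is semi-specularly differentiable at $0$ with $u^{\spd}_{\pm}(0)=0$, yet both rays of its phototangent lie at distance at least $5$ from the center $\overline{0}=(0,5)$, so $|\mathcal{P}(u,0)|=0$ and $u$ is not specularly differentiable at $0$. So "semi-specularly differentiable implies specularly differentiable in one dimension" cannot be the closing step.

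The paper closes exactly this gap by routing through the phototangent: by Proposition 2.6 of \cite{2022_Jung}, specular differentiability of $u$ and $v$ at $x$ gives continuity of $\pht{} u$ and $\pht{} v$ at $x$; one then verifies $\pht{}(au+bv)=a\,\pht{} u+b\,\pht{} v$ pointwise (this is where your formulas for the one-sided limits and one-sided specular derivatives of $au+bv$ enter), so $\pht{}(au+bv)$ is continuous at $x$, which yields specular differentiability of $au+bv$. Your proof becomes complete once you append this: continuity of $\pht{} u$ and $\pht{} v$ at $x$ forces $u[x)=u(x]$ and $v[x)=v(x]$, hence $(au+bv)[x)=(au+bv)(x]$, so both rays of $\pht{}(au+bv)$ emanate from the center of the unit ball and each meets the sphere exactly once, giving $|\mathcal{P}(au+bv,x)|=2$ as the definition demands.
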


\begin{proof} 
  Let $x \in \Omega$.
  Since $u$ and $v$ are specularly differentiable at $x$, the phototangents $\pht{} u$ and $\pht{} v$ are both continuous at $x$ by Proposition 2.6. 
  Let $y \in \mathbb{R}$.
  If $y = x$, one can find that 
  \begin{align*}
    \pht{} (au + bv)(y) 
    &= (au + bv)[y] \\
    &= a u[y] + b v[y] \\
    &= a \pht{} u(y) + b \pht{} v(y).
  \end{align*}
  Next, if $y > x$, we see that 
  \begin{align*}
    \pht{} (au + bv)(y) 
    &= (au + bv)^{\spd}_+(x)(y - x) + (au + bv)[x) \\
    &= \left[ au^{\spd}_+(x) + bv^{\spd}_+(x) \right](y - x) + au[x) + bv[x) \\
    &= a\left[ u^{\spd}_+(x) + u[x) \right] + b\left[ v^{\spd}_+(x) + v[x) \right] \\
    &= a \pht{} u(y) + b \pht{} v(y).
  \end{align*}
  Similarly, $\pht{} (au + bv)(y) = a \pht{} u(y) + b \pht{} v(y)$ whenever $y < x$.
  Since $a \pht{} u + b \pht{} v$ is continuous at $x$, $\pht{} (au + bv)$ is continuous at $x$, completing the proof.
\end{proof}

\begin{proposition}
  The specular space $S^{k}$ is a vector space whenever $k$ is a nonnegative integer.
\end{proposition}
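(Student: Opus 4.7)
The plan is to proceed by induction on $k$, with a sectional form of Lemma~\ref{Lem : closedness of specularly differentiablity} serving as the main analytical input. For the base case $k=0$, condition~\ref{cond. 1} reduces to requiring $u$ itself to lie in $S^0(\Omega)$ in the sense of Definition~\ref{Def : proper integrands}, which is automatic for continuous $u$, and condition~\ref{cond. 2} is vacuous; hence $S^0(\Omega)$ coincides with $C(\Omega)$, which is clearly a vector space.

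For the inductive step I would assume $S^{k-1}(\Omega)$ is a vector space, fix $u, v \in S^k(\Omega)$ and $a, b \in \mathbb{R}$, and set $w := au + bv$. Continuity of $w$ is immediate. I would first establish, by a secondary induction on $|\alpha|$, the linearity identity
\[
  (\partial^S)^{\alpha} w \;=\; a\,(\partial^S)^{\alpha} u + b\,(\partial^S)^{\alpha} v
  \qquad \text{for every } |\alpha| \le k.
\]
The one-step case $\partial^S_{x_i}(af+bg) = a\,\partial^S_{x_i}f + b\,\partial^S_{x_i}g$ follows from Lemma~\ref{Lem : closedness of specularly differentiablity} applied along the section $\Omega_{x_i}(x)$, since $\partial^S_{x_i}$ is by definition the one-dimensional specular derivative in the $i$-th variable with the remaining coordinates held fixed; iterating through the factors of $(\partial^S)^{\alpha}$ then yields the full identity. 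The linear combination on the right lies in $S^0(\Omega)$ of Definition~\ref{Def : proper integrands} because piecewise continuity transfers under linear combinations by taking the union of the singular hyperplanes of the summands, and the proper normalization at each such hyperplane comes from substituting the summed one-sided limits into~\eqref{A notation}. Condition~\ref{cond. 2} is then immediate, since for $|\alpha| \le k-2$ and $i \ne j$,
\[
  \partial^S_{x_i x_j}(\partial^S)^{\alpha} w \;=\; a\,\partial^S_{x_i x_j}(\partial^S)^{\alpha} u + b\,\partial^S_{x_i x_j}(\partial^S)^{\alpha} v,
\]
and the right-hand side is continuous by the hypothesis $u, v \in S^k(\Omega)$.

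The main obstacle is verifying that $S^0(\Omega)$ of Definition~\ref{Def : proper integrands} is itself closed under linear combinations. Piecewise continuity is a routine bookkeeping exercise, but the proper condition at singular hyperplanes has to be checked through~\eqref{A notation}, with the understanding that one may need to take the representative of $au+bv$ whose values on the measure-zero singular set are reassigned by the $A$-formula applied to the summed one-sided limits. A secondary, largely routine task is the lifting of Lemma~\ref{Lem : closedness of specularly differentiablity} from $\mathbb{R}$ to $\mathbb{R}^n$ through sections, which follows directly from the observation that specular partial differentiation operates on one coordinate at a time.
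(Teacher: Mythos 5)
There is a genuine gap: your central step, the linearity identity $\partial^S_{x_i}(af+bg) = a\,\partial^S_{x_i}f + b\,\partial^S_{x_i}g$, is false for specular derivatives, and it does not follow from Lemma~\ref{Lem : closedness of specularly differentiablity}. That lemma (via the phototangent identity $\pht{}(au+bv)=a\,\pht{}u+b\,\pht{}v$) gives linearity only of the \emph{one-sided} specular derivatives, and hence specular \emph{differentiability} of $au+bv$; the two-sided specular derivative is then recovered from the pair of one-sided slopes through the nonlinear formula \ref{F1}, and $A(\cdot,\cdot)$ does not commute with linear combinations. Concretely, for $f(x)=|x|+x$ one has $\partial^S f(0)=A(2,0)=\frac{\sqrt{5}-1}{2}$, while $\partial^S(2f)(0)=A(4,0)=\frac{\sqrt{17}-1}{4}\neq 2\cdot\frac{\sqrt{5}-1}{2}$. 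So $(\partial^S)^{\alpha}w$ cannot be identified with $a(\partial^S)^{\alpha}u+b(\partial^S)^{\alpha}v$ except at points where the relevant one-sided derivatives coincide, i.e.\ off the singular set; your verification of conditions (i) and (ii) of Definition~\ref{Def : the specular space} --- in particular the continuity of $\partial^S_{x_ix_j}(\partial^S)^{\alpha}w$, which you deduce entirely from this identity --- collapses at exactly the points where specular and classical derivatives differ, which is where the whole theory lives.

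A secondary error: the base case $S^0(\Omega)=C(\Omega)$ is wrong. By Definition~\ref{Def : proper integrands}, $S^0(\Omega)$ is the set of \emph{proper} functions, which contains discontinuous piecewise continuous functions such as $\sgn$; moreover the normalization $u(a)=A\bigl(u(a]_{(i)},u[a)_{(i)}\bigr)$ is itself nonlinear in $u$, so closedness of $S^0$ under pointwise linear combinations already requires the reassignment of values on the singular set that you mention only in passing --- this should be front and center, not a parenthetical. For comparison, the paper's own proof never asserts a linearity identity: it fixes $u,v\in S^k(\Omega)$, invokes Lemma~\ref{Lem : closedness of specularly differentiablity} to obtain specular differentiability of $au+bv$, and records a.e.\ continuity of $a\,\partial^S_{x_i}u+b\,\partial^S_{x_i}v$ --- a more modest (and itself rather incomplete) verification that deliberately stops short of computing $\partial^S_{x_i}(au+bv)$ in closed form. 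To repair your induction you would need to argue at the level of the one-sided derivatives, which are genuinely linear, and then track how the $A$-formula behaves on the singular set, rather than pushing a linear operator identity through $(\partial^S)^{\alpha}$.
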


\begin{proof} 
  Let $\Omega$ be is an open set in $\mathbb{R}^{n}$ and let $V(\Omega)$ denote the vector space consisting of all functions $f:\Omega \to \mathbb{R}$.
  Let $a$, $b \in \mathbb{R}$ and let $u$, $v \in S^{k}(\Omega)$.
  We claim that $au + bv \in S^{k}(\Omega)$.
  Fix $i \in \left\{ 1, 2, \ldots, n \right\}$ and $x_0 \in \Omega$.
  It is clear that $a \partial^S_{x_i} u + b \partial^S_{x_i} v$ is continuous a.e. in $\Omega$.
  By Lemma \ref{Lem : closedness of specularly differentiablity}, we have $a \partial^S_{x_i} u + b \partial^S_{x_i} v$ is specularly partial differentiable.
  Hence, we conclude that $S^{k}(\Omega)$ is the subspace of $V(\Omega)$ and then is a vector space.
\end{proof}

Here, we provide the regularity of specular spaces.

\begin{theorem} \label{Thm : regularity of specular space}
  For an open set $\Omega \subset \mathbb{R}^{n}$, we have 
  \begin{equation*} 
    S^{k+1}(\Omega) \subset C^k(\Omega) \subset S^k(\Omega)
  \end{equation*}
  for each $k \in \mathbb{N}$.
\end{theorem}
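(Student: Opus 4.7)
I would prove both inclusions simultaneously by induction on $k$, starting from the base case $k=1$. The inclusion $S^1(\Omega) \subset C^0(\Omega)$ is immediate from the definition of $S^1$, which explicitly requires continuity. For $C^1(\Omega) \subset S^1(\Omega)$, I note that a $C^1$ function has classical partial derivatives that agree with its specular partial derivatives and are continuous, hence trivially piecewise continuous and satisfying the defining A-condition of $S^0$ (since $u[a)_{(i)} = u(a]_{(i)} = u(a)$ gives $A(u(a),u(a)) = u(a)$). For the inductive step establishing $C^k(\Omega) \subset S^k(\Omega)$, given $u \in C^k$ the inductive hypothesis already places $u$ in $S^{k-1}$, so I only need to check the new cases $|\alpha|=k$ in condition (i) and $|\alpha|=k-2$ in condition (ii). Both follow from the fact that specular and classical derivatives coincide at any point of classical differentiability: $(\partial^S)^{\alpha}u = \partial^{\alpha}u$ is continuous for $|\alpha|\leq k$ and therefore lies in $S^0(\Omega)$, while the relevant mixed second specular derivatives equal the classical mixed partials, continuous by $C^k$ regularity.

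For the inductive step establishing $S^{k+1}(\Omega) \subset C^k(\Omega)$, from $u \in S^{k+1} \subset S^k$ I get $u \in C^{k-1}$ by the inductive hypothesis. Fixing a multi-index $\alpha$ of order $k$, I write $\alpha = \beta + e_i$ with $|\beta|=k-1$, so that $\partial^{\beta}u = (\partial^S)^{\beta}u$ is continuous. The goal is to upgrade the existence of $(\partial^S)^{\beta+e_i}u \in S^0(\Omega)$ into the existence and continuity of the classical partial $\partial_{x_i}(\partial^{\beta}u)$. The two features of $u \in S^{k+1}$ that I would exploit are, first, that the iterated specular derivative $(\partial^S)^{\beta+2e_i}u$ also exists and thereby forces $(\partial^S)^{\beta+e_i}u$ to be \emph{strongly} specularly differentiable everywhere (i.e.\ $|\mathcal{P}|=2n$), and second, that when $n\geq 2$ the continuity of the mixed specular derivatives provided by condition (ii) of $S^{k+1}$ synchronises the behaviour across coordinate directions.

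The heart of the argument --- and the main obstacle --- is the ``corner-killing'' step: passing from strong specular differentiability of the continuous function $\partial^{\beta}u$ to its genuine classical differentiability. The prototype to keep in mind is $u(x)=|x|$, whose first specular derivative exists everywhere but whose second specular derivative fails at the origin because the phototangent there intersects the unit circle in fewer than $2n$ points. The task is therefore to show that this kind of corner is the only obstruction: if $\partial^{\beta}u$ is continuous and both $(\partial^S)^{\beta+e_i}u$ and $(\partial^S)^{\beta+2e_i}u$ exist pointwise as strong specular derivatives and lie in $S^0(\Omega)$, then the right and left classical derivatives of $\partial^{\beta}u$ must coincide at every point; once they do, the A-condition and piecewise continuity built into $S^0(\Omega)$ promote pointwise existence to continuity of the classical $k$-th partial.
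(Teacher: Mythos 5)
Your plan follows essentially the same route as the paper: induction on $k$, with $C^k(\Omega)\subset S^k(\Omega)$ coming from the coincidence of classical and specular derivatives for classically differentiable functions, and $S^{k+1}(\Omega)\subset C^k(\Omega)$ resting on the fact that existence of the second-order specular derivative forces existence (and then continuity) of the first-order classical derivative. The ``corner-killing'' step you single out as the main obstacle is exactly where the paper's proof lives, but the paper does not prove it in-line either: it invokes Theorem 2.27 of the authors' earlier work for ``second-order specular differentiability implies first-order classical differentiability,'' and Proposition 2.26 of the same reference for the continuity upgrade of $u_{x_i}$. So your reconstruction of the architecture is accurate; what remains unproven in your proposal is precisely the externally cited lemma, not a step the paper itself argues. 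Two minor remarks: the paper's argument is purely coordinate-by-coordinate and does not use condition (ii) (continuity of mixed specular derivatives) for this theorem --- that hypothesis is reserved for the symmetry of second specular derivatives --- so your appeal to it in the $n\geq 2$ case is unnecessary; and the continuity upgrade is cleaner via a Darboux-type argument (a pointwise-everywhere classical derivative has the intermediate value property, hence cannot have the jump discontinuities that piecewise continuity would otherwise permit) than via the $A$-condition of $S^0$.
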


\begin{proof} 
  Since the classical differentiability implies the specularly differentiability, it holds $C^1(\Omega) \subset S^1(\Omega)$. 
  Next, let $1 \leq i \leq n$. 
  Then there exists $u_{x_i}$ on $\Omega$ due to the fact that the second order specular differentiability implies the first order classical differentiability, see \cite[Theorem 2.27]{2022_Jung}. 
  Moreover, by \cite[Proposition 2.26]{2022_Jung}, $u_{x_i}$ is continuous on $\Omega$ since $u$ is continuous on $\Omega$ and $\partial^S_{x_ix_i}u$ exists.  
  Hence, we have $S^2(\Omega) \subset C^1(\Omega)$.
  Finally, the mathematical induction completes the proof.
\end{proof}

\begin{example}
  In general, $C^k(\Omega) \subset S^{k+1}(\Omega)$ does not hold for a nonnegative integer $k$.
  In other words, the continuity of $u_{x_i}$ does not ensure the existence of $\partial^S_{x_i}u_{x_i}$.
  Take the counterexample as the function $u(x) = x \sin\left( \frac{1}{x} \right)$, which is continuous at $x=0$ but right-hand and left-hand derivatives do not exist.
\end{example}

\begin{example}
  The function defined as in \eqref{Ex : n-dim p.w.c. function} is in the class $S^1\left(\mathbb{R}^{2}\right)$.
\end{example}

\begin{remark}
  Let $\Omega$ be an open set in $\mathbb{R}^2$ and $(x, y)$ be a point in $\mathbb{R}^2$.
  Thanks to Theorem \ref{Thm : regularity of specular space}, the definition of the specular space in two-dimensions can be rewritten as follows.
  The set $S^2(\Omega)$ consists of all continuous functions $u:\Omega \to \mathbb{R}$ satisfying the following conditions:
    \begin{enumerate}[label=(\roman*)] 
    \rm\item $u_x$, $u_y$, $\partial^S_x u_x$, $\partial^S_y u_x$, $\partial^S_x u_y$, and $\partial^S_y u_y$ exist and are in $S^0(\Omega)$;
    \rm\item $\partial^S_{xy}u$ and $\partial^S_{yx}u$ are continuous in $\Omega$. 
    \end{enumerate}
  Moreover, given a function $u \in S^2(\Omega)$, Theorem \ref{Thm : regularity of specular space} allows us to simply write as follows:
  \begin{equation*} 
    \partial^S_{x_ix_j}u = \frac{\partial^2 u}{\partial^S x_i \partial^S x_j} = \frac{\partial^2 u}{\partial^S x_i \partial x_j} = \partial^S_{x_i}u_{x_j}
  \end{equation*}
  for every $1 \leq i, j \leq 2$.
\end{remark}

\begin{example} \label{Ex : C^1 but not in C^2}
  Consider the function $f:\mathbb{R}^{2} \to \mathbb{R}$ defined by 
  \begin{equation*}
  f(x, y) =
  \begin{cases} 
    \displaystyle \frac{xy(x^2 - y^2)}{x^2 + y^2} & \text{if } (x, y) \neq (0, 0),\\[0.1cm]
    0 & \text{if } (x, y) = (0, 0),
  \end{cases}
  \end{equation*}
  which is a typical example showing the existence of a function in $C^1$-class but not in $C^2$-class.
  This function $f$ is not in $S^2$-class as $f$ does not have any singular line. 
\end{example}

\begin{example}
  Consider the function $q : \mathbb{R} \to \mathbb{R}$ by 
  \begin{equation} \label{Fnc : q}
    q(x) = \frac{1}{2}x|x|
  \end{equation}
  for $x \in \mathbb{R}$.
  Clearly, $q \in C^1(\mathbb{R})$.
  Moreover, one can calculate that 
  \begin{equation*} 
    q'(x) = x \sgn(x) = |x|   
    \qquad \text{and} \qquad 
    q^{\spd\spd}(x) = \sgn(x) = \frac{|x|}{x} = \frac{x}{|x|}.
  \end{equation*}
  Since the sign function has the singular point $x=0$, we conclude $q \notin C^2(\mathbb{R})$  but $q \in S^{2}(\mathbb{R})$.
\end{example}

\begin{remark}
  In two-dimensions, $C^2$ is a proper subset of $S^2$, i.e., 
  \begin{equation*} 
    C^2(\Omega) \subsetneq S^2(\Omega)
  \end{equation*}
  for an open set $\Omega \subset \mathbb{R}^{2}$.
  Take a counterexample as the function $u:\mathbb{R}^{2} \to \mathbb{R}$ defined by 
  \begin{equation*} 
    u(x, y) = q(x) + q(y)
  \end{equation*}
  for $(x, y) \in \mathbb{R}^{2}$, where $q$ is defined as in \eqref{Fnc : q}.
  Then the function $u$ is $S^2$-function but is not twice specularly differentiable with respect to $x$ as well as $y$.
\end{remark}

Now, we prove the symmetry of second specular derivatives in two-dimensions.
Recall that the condition \ref{cond. 2} in Definition \ref{Def : the specular space} implies that $\partial^S_x u_y$ and $\partial^S_y u_x$ are continuous functions if $u$ is in $S^2$-class.

\begin{theorem} \label{Thm : Symmetry of second specular derivatives in 2d}
  Let $\Omega$ be an open set in $\mathbb{R}^2$ and $(x, y)$ be a point in $\mathbb{R}^2$.
  If $u \in S^2(\Omega)$, then 
  \begin{equation*} 
    \frac{\partial^2 u}{\partial^S x \partial^S y} = \frac{\partial^2 u}{\partial^S y \partial^S x}.
  \end{equation*}
\end{theorem}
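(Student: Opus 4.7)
My plan is to mimic the classical Schwarz/Clairaut argument, but with the classical Fundamental Theorem of Calculus replaced by its specular counterpart (Theorem~\ref{Thm : the first form of FTC with spd}) whenever a specular derivative appears in an integrand, and to exploit the continuity of the mixed specular derivatives furnished by condition \ref{cond. 2} of Definition~\ref{Def : the specular space} in the final limiting step.

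First I would note that $u \in S^2(\Omega) \subset C^1(\Omega)$ by Theorem~\ref{Thm : regularity of specular space}, so $u_x$ and $u_y$ exist and are continuous on $\Omega$, and by condition \ref{cond. 2} applied with $k=2$ both $\partial^S_{xy} u = \partial^S_x u_y$ and $\partial^S_{yx} u = \partial^S_y u_x$ are continuous on $\Omega$ as well. Fix $(x_0, y_0) \in \Omega$, choose $h, k > 0$ small enough that the closed rectangle $[x_0, x_0+h] \times [y_0, y_0+k]$ lies inside $\Omega$, and introduce the second-order difference
\begin{equation*}
  \Delta(h, k) := u(x_0+h, y_0+k) - u(x_0+h, y_0) - u(x_0, y_0+k) + u(x_0, y_0).
\end{equation*}

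The core computation is to express $\Delta(h, k)$ as an iterated integral in two different orders. Because the vertical slices $t \mapsto u(x_0+h, t)$ and $t \mapsto u(x_0, t)$ are $C^1$, the classical FTC gives $\Delta(h, k) = \int_{y_0}^{y_0+k} [u_y(x_0+h, t) - u_y(x_0, t)] \, dt$. For each fixed $t$, the horizontal slice $s \mapsto u_y(s, t)$ is continuous with one-variable specular derivative $s \mapsto \partial^S_x u_y(s, t)$, and the latter is continuous (hence piecewise continuous with empty singular set). Theorem~\ref{Thm : the first form of FTC with spd} then rewrites the bracketed difference as $\int_{x_0}^{x_0+h} \partial^S_x u_y(s, t) \, ds$, yielding
\begin{equation*}
  \Delta(h, k) = \int_{y_0}^{y_0+k} \int_{x_0}^{x_0+h} \partial^S_x u_y(s, t) \, ds \, dt.
\end{equation*}
Running the symmetric argument with the roles of $x$ and $y$ swapped produces the companion identity $\Delta(h, k) = \int_{x_0}^{x_0+h} \int_{y_0}^{y_0+k} \partial^S_y u_x(s, t) \, dt \, ds$.

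To close, I would divide both expressions for $\Delta(h, k)$ by $hk$ and send $h, k \to 0^+$; by continuity of the two integrands at $(x_0, y_0)$, each shrinking rectangular average tends to the integrand's value at the corner, so $\partial^S_x u_y(x_0, y_0) = \partial^S_y u_x(x_0, y_0)$. The main obstacle I anticipate is the slice-wise invocation of Theorem~\ref{Thm : the first form of FTC with spd}: one must verify that $\partial^S_x u_y$ restricted to a horizontal line genuinely coincides with the one-variable specular derivative of the corresponding slice $s \mapsto u_y(s, t)$, which follows from the axis-aligned definition of specular partial derivatives together with the continuity of $u_y$, and whose hypothesis of a \emph{continuous} (hence trivially piecewise continuous) integrand is supplied exactly by condition \ref{cond. 2}. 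This is presumably why the authors flag condition \ref{cond. 2} as \emph{crucial} for the symmetry result.
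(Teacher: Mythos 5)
Your proof is correct, but it follows a genuinely different route from the paper's. The paper runs the ``mean value'' version of the Schwarz argument: it applies the classical Mean Value Theorem to $F(x)=u(x,b+\Delta y)-u(x,b)$ to write the second difference as $[u_x(c,b+\Delta y)-u_x(c,b)]\Delta x$, then invokes the Quasi-Mean Value Theorem for specular derivatives (imported from \cite{2022_Jung}) to sandwich the remaining difference quotient between $\partial^S_y u_x(c,d_1)$ and $\partial^S_y u_x(c,d_2)$, and finishes with the squeeze theorem and the continuity supplied by condition \ref{cond. 2}. You instead run the ``integral'' (Clairaut-type) version: classical FTC along one family of slices, the first form of FTC with specular derivatives (Theorem~\ref{Thm : the first form of FTC with spd}, applied with empty singular set since the integrand is continuous) along the other, giving two iterated-integral representations of $\Delta(h,k)$ whose shrinking averages converge to the two mixed derivatives. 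Both arguments hinge on exactly the same regularity inputs --- $S^2\subset C^1$ and the continuity of $\partial^S_x u_y$ and $\partial^S_y u_x$ from condition \ref{cond. 2} --- and your slice-wise identification of $\partial^S_x u_y(\cdot,t)$ with the one-variable specular derivative of $s\mapsto u_y(s,t)$ is indeed immediate from the axis-aligned definition, so there is no gap there. What your route buys is self-containedness (it uses a theorem proved in this paper rather than the external Quasi-MVT) and it avoids tracking the increment-dependent intermediate points $c$, $d_1$, $d_2$ through the double limit; what the paper's route buys is that it never needs to integrate a specular derivative, only to bound a difference quotient, so it is closer in spirit to the classical proof and slightly lighter on measure/integration technicalities such as the integrability in $t$ of the inner integral (which in your argument follows from the joint continuity of the mixed specular partials).
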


\begin{proof} 
  Let $(a, b)$ be a point in $\Omega$ and choose sufficiently small $\Delta x > 0$ and $\Delta y > 0$.
  Consider a rectangle $R$ with vertices $(a, b)$, $(a + \Delta x, b)$, $(a, b + \Delta y)$, and $(a + \Delta x, b + \Delta y)$.
  Define a single variable function $F$ by
  \begin{equation*} 
    F(x) := u(x, b + \Delta y) - u(x, b)
  \end{equation*}
  and a two variable function $E$ by
  \begin{equation*} 
    E(\Delta x, \Delta y) := u(a + \Delta x, b + \Delta y) - u(a + \Delta x, b) - u(a, b + \Delta y) + u(a, b).
  \end{equation*}
  Applying the Mean Value Theorem to $F$, there exists $c \in (a, a + \Delta x)$ such that 
  \begin{equation} \label{Thm : Symmetry of second specular derivatives in 2d - 1}
    \frac{F(a + \Delta x) - F(a)}{\Delta x} = F'(c).
  \end{equation}
  Also, one can calculate that 
  \begin{equation} \label{Thm : Symmetry of second specular derivatives in 2d - 2}
    \left.\frac{d}{dx}\right\vert_{x=c} F(x) = u_x (c, b + \Delta y) - u_x(c, b).
  \end{equation}
  Combining \eqref{Thm : Symmetry of second specular derivatives in 2d - 1} and \eqref{Thm : Symmetry of second specular derivatives in 2d - 2}, we have 
  \begin{align}  
    E(\Delta x, \Delta y) &= F(a + \Delta x) - F(a)  \nonumber \\
    &= F'(c) \Delta x \nonumber \\
    &= \left[ u_x (c, b + \Delta y) - u_x(c, b) \right] \Delta x. \label{Thm : Symmetry of second specular derivatives in 2d - 3}
  \end{align}
  Since $u_x$ is continuous and specularly partial differentiable with respect to $y$, the Quasi-Mean Value Theorem implies that there exist $d_1$, $d_2 \in (b, b + \Delta y)$ such that 
  \begin{equation*} 
    \partial^S_y u_x (c, d_1) \leq \frac{u_x(c, b + \Delta y) - u_x(c, b)}{\Delta y} \leq \partial^S_y u_x (c, d_2).
  \end{equation*}
  Owing to \eqref{Thm : Symmetry of second specular derivatives in 2d - 3}, we obtain that 
  \begin{equation} \label{Thm : Symmetry of second specular derivatives in 2d - 4}
    \partial^S_y u_x (c, d_1) \leq \frac{E(\Delta x, \Delta y)}{\Delta x \Delta y} \leq \partial^S_y u_x (c, d_2).
  \end{equation}
  Since $\partial^S_y u_x $ is assumed to be continuous, 
  \begin{equation*} 
    \lim_{(\Delta x, \Delta y) \to (0, 0)} \partial^S_y u_x(c, d_1)  = \partial^S_y u_x(a, b) = \lim_{(\Delta x, \Delta y) \to (0, 0)} \partial^S_y u_x(c, d_2) .
  \end{equation*}
  Taking the limit $(\Delta x, \Delta y) \to (0, 0)$ to \eqref{Thm : Symmetry of second specular derivatives in 2d - 4}, the squeeze theorem conclude that 
  \begin{equation*} 
    \partial^S_y u_x(a, b) = \lim_{(\Delta x, \Delta y) \to (0, 0)} \frac{E(\Delta x, \Delta y)}{\Delta x \Delta y}.
  \end{equation*}

  Next, define a single variable function $G$ by
  \begin{equation*} 
    G(y) = u(a + \Delta x, y) - u(a, y).
  \end{equation*}
  As before, we can apply the Mean Value Theorem and the Quasi-Mean Value Theorem to $G$, finding that there exist $d \in (b, b + \Delta y)$ and $c_1$, $c_2 \in (a, a + \Delta x)$ such that 
  \begin{equation*} 
    \partial^S_x u_y (c_1, d) \leq \frac{E(\Delta x, \Delta y)}{\Delta x \Delta y} \leq \partial^S_x u_y (c_2, d).
  \end{equation*}
  Since $\partial^S_x u_y $ is assumed to be continuous, we conclude that 
  \begin{equation*} 
    \lim_{(\Delta x, \Delta y) \to (0, 0)} \partial^S_x u_y(c_1, d) = \partial^S_x u_y(a, b) = \lim_{(\Delta x, \Delta y) \to (0, 0)} \partial^S_x u_y(c_2, d)
  \end{equation*}
  and hence 
  \begin{equation*} 
    \partial^S_x u_y(a, b) = \lim_{(\Delta x, \Delta y) \to (0, 0)} \frac{E(\Delta x, \Delta y)}{\Delta x \Delta y}
  \end{equation*}
  by the squeeze theorem again.

  Finally, since the limits for $\partial^S_x u_y(a, b)$ and $\partial^S_y u_x(a, b)$ are same, we have established the desired result.
\end{proof}

\subsection{The specularly normal vector}

In the classical derivative sense, for a differentiable function $u$ in $n$-dimensions, the vector $\left(u_{x_1}, u_{x_2}, \ldots, u_{x_n}, -1 \right)$ is perpendicular to the surface of $u$.
In this subsection, we extend this concept in the specular derivative sense.

\begin{definition}
  Let $\Omega$ be an open set in $\mathbb{R}^{n}$ and $a$ be a point in $\Omega$.
  Let $u$ be a weakly specularly differentiable function on $\Omega$.
  We say a vector $v$ in $\mathbb{R}^{n+1}$ is a (\emph{specularly}) \emph{normal vector}, or simply is (\emph{specularly}) \emph{normal}, to the surface of $u$ at $a$ if there exists a weak specular tangent hyperplane to the graph of $u$ at $(a, u[a])$ which is perpendicular to $v$.
\end{definition}

From now on, we only deal with a specularly normal vector in two-dimensions and denote $(x_1, x_2)$ or $(x, y)$ to be a typical point in $\mathbb{R}^{2}$.
Here, the following three questions can be risen.
First, is the existence of a specularly normal vector guaranteed?
Second, if so, is a specularly normal vector unique?
Third, what is the explicit formula for a specularly normal vector?
To examine the above questions, we find the following fundamental lemma.

\begin{lemma} \label{Lem : lines joining the components of P}
  Let $\Omega$ be an open set in $\mathbb{R}^{2}$ and $a = (a_1, a_2)$ be a point in $\Omega$.
  Let $u:\Omega \to \mathbb{R}$ be a specularly differentiable function on $\Omega$.
  Let $p_1$, $q_1$, $p_2$, and $q_2$ be all components of $\mathcal{P}(u, a)$, where $p_i$ and $q_i$ in the $x_ix_3$-plane such that $p_i \innerprd e_i > q_i \innerprd e_i$ for each $i = 1, 2$.
  For each $i = 1, 2$, let $\ell_i$ be the line joining $p_i$ and $q_i$.
  Then the symmetric forms of the lines $\ell_1$ and $\ell_2$ are given by 
  \begin{equation*}
    \ell_1 : \left( \frac{1}{\sqrt{1 + \alpha_1^2}} + \frac{1}{\sqrt{1 + \beta_1^2}} \right)^{-1}\left( x_1 + \frac{1}{\sqrt{1 + \beta_1^2}} \right) = \left( \frac{\alpha_1}{\sqrt{1 + \alpha_1^2}} + \frac{\beta_1}{\sqrt{1 + \beta_1^2}} \right)^{-1}\left( x_3 + \frac{\beta_1}{\sqrt{1 + \beta_1^2}} \right)
  \end{equation*}
  and 
  \begin{equation*}
    \ell_2 : \left( \frac{1}{\sqrt{1 + \alpha_2^2}} + \frac{1}{\sqrt{1 + \beta_2^2}} \right)^{-1}\left( x_2 + \frac{1}{\sqrt{1 + \beta_2^2}} \right) = \left( \frac{\alpha_2}{\sqrt{1 + \alpha_2^2}} + \frac{\beta_2}{\sqrt{1 + \beta_2^2}} \right)^{-1}\left( x_3 + \frac{\beta_2}{\sqrt{1 + \beta_2^2}} \right)
  \end{equation*}
  for $(x_1, x_2, x_3) \in \mathbb{R}^{3}$, where $\alpha_1 := \partial^R_{x_1} u(a)$, $\beta_1 := \partial^L_{x_1} u(a)$, $\alpha_2 := \partial^R_{x_2} u(a)$, and $\beta_2 := \partial^L_{x_2} u(a)$.
\end{lemma}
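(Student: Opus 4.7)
My plan is to compute each of the four points $p_1,q_1,p_2,q_2$ explicitly by parametrising the rays of the phototangent as unit-speed curves from the centre of the sphere, and then to read off the symmetric form of the line through the two intersection points in each $x_i x_3$-plane.

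Because $u$ is specularly differentiable at $a$, the strong condition $u[a]_{(1)} = u[a]_{(2)}$ holds, so the two sphere centres $\overline{a}_{(1)}$ and $\overline{a}_{(2)}$ collapse to a single point $\overline{a} = (a, u[a])$; the equations of $\ell_1,\ell_2$ as written are naturally interpreted in the affine frame centred at $\overline{a}$, which is why no additive terms in $a_1, a_2, u[a]$ appear. In this frame, each phototangent $\pht{x_i}u$ consists of two rays emanating from the origin of the $x_i x_3$-plane: the right ray has slope $\alpha_i$ and the left ray has slope $\beta_i$, where $\alpha_i = \partial^{R}_{x_i}u(a)$ and $\beta_i = \partial^{L}_{x_i}u(a)$.

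Next I would observe that since each ray starts at the centre of the unit sphere $\partial B(\overline{a},1)$, it meets the sphere at the unique point at arc length $1$ from the origin. The right ray has unit tangent $(1,\alpha_i)/\sqrt{1+\alpha_i^{2}}$ and the left ray, traced outwards, has unit tangent $(-1,-\beta_i)/\sqrt{1+\beta_i^{2}}$. Reading off the endpoints in the $x_i x_3$-plane gives
\begin{equation*}
  p_i=\left(\tfrac{1}{\sqrt{1+\alpha_i^{2}}},\;\tfrac{\alpha_i}{\sqrt{1+\alpha_i^{2}}}\right),
  \qquad
  q_i=\left(-\tfrac{1}{\sqrt{1+\beta_i^{2}}},\;-\tfrac{\beta_i}{\sqrt{1+\beta_i^{2}}}\right),
\end{equation*}
and the inequality $p_i\innerprd e_i>q_i\innerprd e_i$ is automatic from this labelling (the first coordinate of $p_i$ is positive, that of $q_i$ negative). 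This yields exactly the four points of $\mathcal{P}(u,a)$ required by specular differentiability.

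To finish, the direction vector of $\ell_i$ is $p_i-q_i$, whose two components are precisely the sums $\frac{1}{\sqrt{1+\alpha_i^{2}}}+\frac{1}{\sqrt{1+\beta_i^{2}}}$ and $\frac{\alpha_i}{\sqrt{1+\alpha_i^{2}}}+\frac{\beta_i}{\sqrt{1+\beta_i^{2}}}$ appearing in the statement. Substituting into the standard symmetric form
\begin{equation*}
  \frac{x_i-q_i\innerprd e_i}{p_i\innerprd e_i-q_i\innerprd e_i}=\frac{x_3-q_i\innerprd e_3}{p_i\innerprd e_3-q_i\innerprd e_3},
\end{equation*}
with the coordinates of $q_i$ supplying exactly the shifts $+\tfrac{1}{\sqrt{1+\beta_i^{2}}}$ and $+\tfrac{\beta_i}{\sqrt{1+\beta_i^{2}}}$ on the left-hand side, gives the asserted equations. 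There is no deep obstacle; the argument is a direct parametrisation. The only points demanding care are the orientation of the left ray, which forces both components of its unit tangent to carry minus signs, and the implicit change of frame to one centred at the common apex $\overline{a}$, without which the stated equations would need additive corrections by $a_i$ and $u[a]$.
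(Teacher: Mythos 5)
Your proposal is correct and follows essentially the same route as the paper: both compute $p_i$ and $q_i$ as the unit-distance points on the right and left rays of the phototangent $\pht{x_i}u$ (i.e.\ the intersections with $\partial B_1(\overline{a})$), obtaining the coordinates $\bigl(\pm\tfrac{1}{\sqrt{1+\gamma^2}}, \pm\tfrac{\gamma}{\sqrt{1+\gamma^2}}\bigr)$ with $\gamma=\alpha_i$ or $\beta_i$, and then substitute into the standard symmetric form of the line through two points. Your explicit remarks about working in the affine frame centred at $\overline{a}$ and about the sign of the left ray's unit tangent are details the paper leaves implicit, but they do not change the argument.
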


Before the proof we provide the following figure for Lemma \ref{Lem : lines joining the components of P}.
In Figure \ref{Fig : The lines l_1 and l_2}, we consider a simple case when $\overline{a} = (0, 0, 0)$.
The red rays and the blue rays illustrate $\pht{x_1}u$ and $\pht{x_2}u$, respectively.
Also, the purple lines are $\ell_1$ and $\ell_2$.

\begin{figure}[H] 
  \centering 
  \includegraphics[scale=0.9]{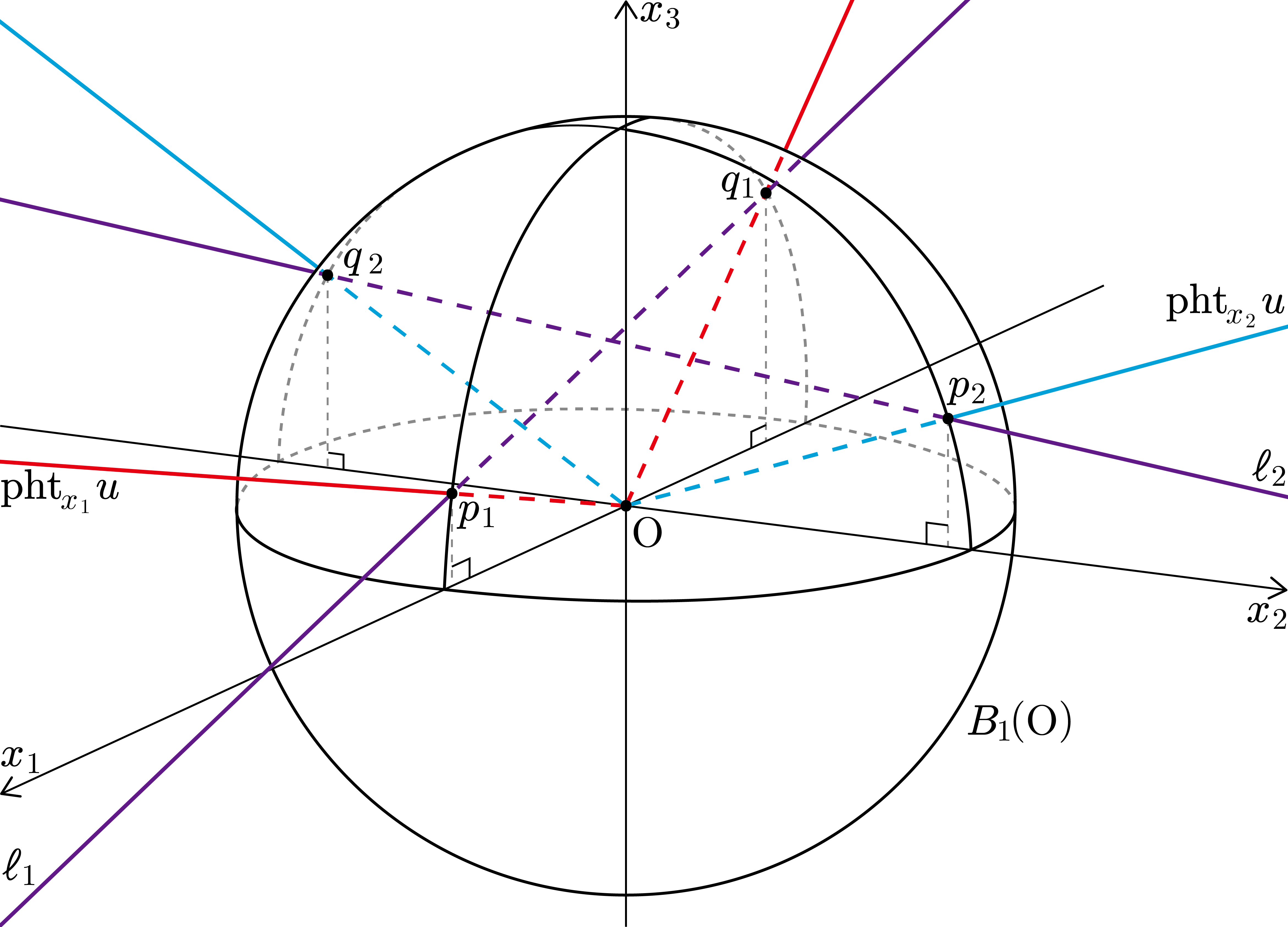} 
  \caption{The lines $\ell_1$ and $\ell_2$}
  \label{Fig : The lines l_1 and l_2}
\end{figure}

Now, we prove Lemma \ref{Lem : lines joining the components of P} as follows.

\begin{proof} 
  Consider an unit open ball $B_1 = B_1(\overline{a})$ centered at $\overline{a}$, where $\overline{a} = (a_1, a_2, u(a_1, a_2))$.
  Then $p_i$ and $q_i$ are the intersection points of the phototangent of $u$ at $a$ with respect to $x_i$ and a sphere $\partial B_1$ such that $p_i \innerprd e_i > q_i \innerprd e_i$ for each $i = 1, 2$.
  Using basic geometry properties, one can find that 
  \begin{equation*} 
    p_1 = \left( \frac{1}{\sqrt{1 + \alpha_1^2}}, 0, \frac{\alpha_1}{\sqrt{1 + \alpha_1^2}} \right) 
    \qquad \text{and} \qquad
    q_1 = \left( \frac{-1}{\sqrt{1 + \beta_1^2}}, 0, \frac{-\beta_1}{\sqrt{1 + \beta_1^2}} \right), 
  \end{equation*}
  \begin{equation*} 
    p_2 = \left( 0, \frac{1}{\sqrt{1 + \alpha_2^2}}, \frac{\alpha_2}{\sqrt{1 + \alpha_2^2}} \right) 
    \qquad \text{and} \qquad
    q_2 = \left( 0, \frac{-1}{\sqrt{1 + \beta_2^2}}, \frac{-\beta_2}{\sqrt{1 + \beta_2^2}} \right),
  \end{equation*}
  regardless of the signs of the semi-derivatives $\alpha_1$, $\beta_1$, $\alpha_2$, and $\beta_2$ (see Figure \ref{Fig : points p and q}).
  
  \begin{figure}[H] 
    \centering 
    \includegraphics[scale=1]{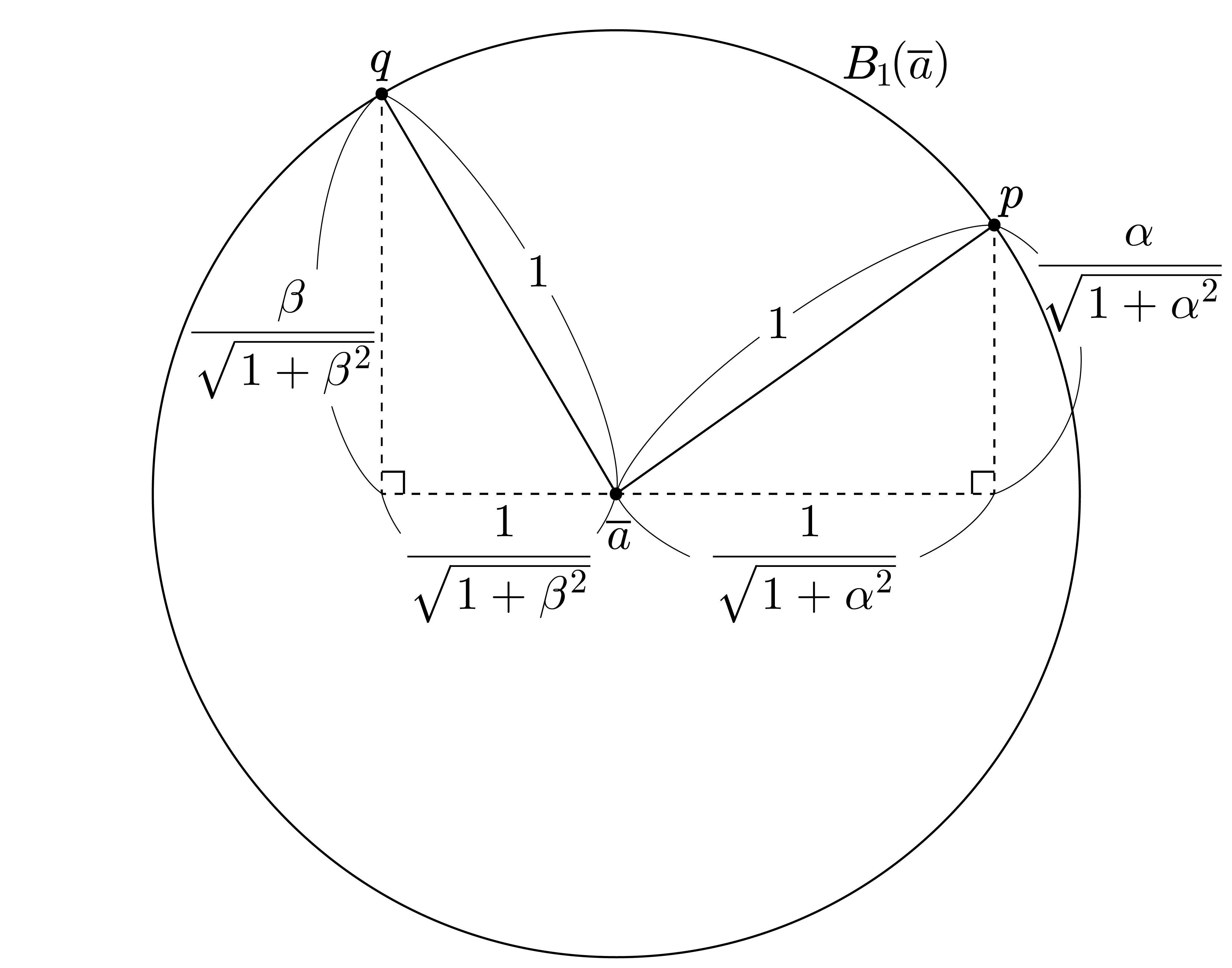} 
    \caption{Points $p$ and $q$}
    \label{Fig : points p and q}
  \end{figure}

  \noindent Hence, the symmetric forms of the lines $\ell_1$ and $\ell_2$ are given by 
  \begin{equation*}
    \ell_1 : \frac{x_1 - q_1 \innerprd e_1}{p_1 \innerprd e_1 - q_1 \innerprd e_1} = \frac{x_3 - q_1 \innerprd e_3}{p_1 \innerprd e_3 - q_1 \innerprd e_3}
  \end{equation*}
  and 
  \begin{equation*}
    \ell_2 : \frac{x_2 - q_2 \innerprd e_2}{p_2 \innerprd e_2 - q_2 \innerprd e_2} = \frac{x_3 - q_2 \innerprd e_3}{p_2 \innerprd e_3 - q_2 \innerprd e_3}
  \end{equation*}
  for $(x_1, x_2, x_3) \in \mathbb{R}^{3}$, completing the proof.
\end{proof}

Not every specularly differentiable function has a unique specular tangent. 
One can check whether a given function has a strong specular tangent hyperplane by using the following criterion.

\begin{theorem} \label{Thm : strong specular tangent hyperplane criterion}
  \emph{(The Strong Specular Tangent Hyperplane Criterion)} 
  Let $\Omega$ be an open set in $\mathbb{R}^{2}$ and $a = (a_1, a_2)$ be a point in $\Omega$.
  If a function $u:\Omega \to \mathbb{R}$ has a strong specular tangent hyperplane to the graph of $u$ at $(a, u[a])$, then 
  \begin{equation} \label{Eq : strong specular tangent hyperplane criterion}
    (\alpha_1 - \beta_1)\left( \sqrt{1 + \alpha_2^2} + \sqrt{1 + \beta_2^2}\right) - (\alpha_2 - \beta_2)\left( \sqrt{1 + \alpha_1^2} + \sqrt{1 + \beta_1^2}\right)= 0,
  \end{equation}
  where $\alpha_1 := \partial^R_{x_1} u(a)$, $\beta_1 := \partial^L_{x_1} u(a)$, $\alpha_2 := \partial^R_{x_2} u(a)$, and $\beta_2 := \partial^L_{x_2} u(a)$.
\end{theorem}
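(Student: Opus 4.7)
The plan is to translate the existence of a strong specular tangent hyperplane at $(a, u[a])$ into a statement about the four points $p_1$, $q_1$, $p_2$, $q_2$ furnished by Lemma~\ref{Lem : lines joining the components of P}: namely, that they must all lie on a single $2$-plane in $\mathbb{R}^3$. Equivalently, the lines $\ell_1$ and $\ell_2$ of that lemma must be coplanar, and the whole proof then reduces to converting this coplanarity into the claimed algebraic identity.

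With this reduction in hand, I would work in local coordinates centered at $\overline{a}$, so that Lemma~\ref{Lem : lines joining the components of P} places $\ell_1$ in the hyperplane $\{x_2 = 0\}$ and $\ell_2$ in $\{x_1 = 0\}$. Two lines sitting in these two orthogonal walls are coplanar precisely when they are parallel or meet, and any common point is forced to lie on the shared edge $\{x_1 = x_2 = 0\}$, i.e.\ the $x_3$-axis. A direction-vector check rules out parallelism: the direction of $\ell_1$ has $e_1$-component $-\bigl(1/\sqrt{1+\alpha_1^2} + 1/\sqrt{1+\beta_1^2}\bigr) \ne 0$, whereas the direction of $\ell_2$ has vanishing $e_1$-component, so the two direction vectors cannot be proportional. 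Hence $\ell_1$ and $\ell_2$ are coplanar exactly when they cross the $x_3$-axis at the same height.

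It remains to compute where each $\ell_i$ crosses the $x_3$-axis and to equate the two heights. Reading off the symmetric form in Lemma~\ref{Lem : lines joining the components of P}, setting $x_1 = 0$ in the equation of $\ell_1$ (respectively $x_2 = 0$ in that of $\ell_2$), and clearing the common factor $\sqrt{(1+\alpha_i^2)(1+\beta_i^2)}$ from numerator and denominator, I expect to obtain the concise formula
\[
\ell_i \cap \{x_1 = x_2 = 0\} \;=\; \left(0,\,0,\,\frac{\alpha_i - \beta_i}{\sqrt{1+\alpha_i^2} + \sqrt{1+\beta_i^2}}\right), \qquad i = 1, 2.
\]
Equating these two $x_3$-coordinates and cross-multiplying delivers exactly \eqref{Eq : strong specular tangent hyperplane criterion}. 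The only real obstacle is bookkeeping --- shepherding four distinct square-root terms through the symmetric-form simplification without algebraic slips. Conceptually, the argument reduces to the geometric picture that two lines, one inside each coordinate wall at $\overline{a}$, are coplanar exactly when they strike the shared edge at the same height.
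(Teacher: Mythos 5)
Your proposal is correct and follows essentially the same route as the paper: both arguments reduce the existence of the strong specular tangent hyperplane to the coplanarity of the lines $\ell_1$ and $\ell_2$ from Lemma \ref{Lem : lines joining the components of P}. The only difference is in how that coplanarity is turned into \eqref{Eq : strong specular tangent hyperplane criterion}: the paper writes the standard $3\times 3$ determinant test for two skew-or-meeting lines and omits the expansion, whereas you observe that since $\ell_1\subset\{x_2=0\}$ and $\ell_2\subset\{x_1=0\}$ and the lines are visibly non-parallel, coplanarity is equivalent to their striking the $x_3$-axis at the same height; your intercept formula $\frac{\alpha_i-\beta_i}{\sqrt{1+\alpha_i^2}+\sqrt{1+\beta_i^2}}$ checks out and cross-multiplication gives the criterion directly, so in effect you have supplied, in cleaner form, the ``cumbersome calculation'' the paper leaves out.
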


\begin{proof}
  Consider the lines $\ell_1$ and $\ell_2$ in Lemma \ref{Lem : lines joining the components of P}.
  Since the lines $\ell_1$ and $\ell_2$ are not parallel and lie on the same plane containing the four points $p_1$, $q_1$, $p_2$, and $q_2$, there exists have a unique intersection point of the lines $\ell_1$ and $\ell_2$.
  Hence, we obtain that 
  \begin{equation*}
    \det
    \begin{pmatrix}
      \displaystyle  \frac{1}{\sqrt{1 + \beta_1^2}} & \displaystyle \frac{-1}{\sqrt{1 + \beta_2^2}}  & \displaystyle \frac{\beta_1}{\sqrt{1 + \beta_1^2}} - \frac{\beta_2}{\sqrt{1 + \beta_2^2}} \\
      \displaystyle  \frac{1}{\sqrt{1 + \alpha_1^2}} + \frac{1}{\sqrt{1 + \beta_1^2}}  & 0 & \displaystyle \frac{\alpha_1}{\sqrt{1 + \alpha_1^2}} + \frac{\beta_1}{\sqrt{1 + \beta_1^2}}  \\
      \displaystyle  0 & \displaystyle \frac{1}{\sqrt{1 + \alpha_2^2}} + \frac{1}{\sqrt{1 + \beta_2^2}} & \displaystyle \frac{\alpha_2}{\sqrt{1 + \alpha_2^2}} + \frac{\beta_2}{\sqrt{1 + \beta_2^2}}
    \end{pmatrix}
    = 0,
  \end{equation*}
  which implies the criterion \eqref{Eq : strong specular tangent hyperplane criterion}; we omit the cumbersome calculation.
\end{proof}

If $u$ is classically differentiable, then $\alpha_1 = \beta_1$ and $\alpha_2 = \beta_2$ so that the criterion \eqref{Eq : strong specular tangent hyperplane criterion} always holds true.
Hence, Theorem \ref{Thm : strong specular tangent hyperplane criterion} makes sense in the classical derivative sense. 

\begin{example}
  Consider the function $u(x, y) = |x| - |y| - x - y$ for $(x, y) \in \mathbb{R}^{2}$, which has four weak specular tangent hyperplanes at $(0, 0, 0)$ (see \cite[Example 3.16]{2022_Jung}).
  Note that 
  \begin{equation*} 
    \partial^R_x u(0, 0) = 0 = \partial^L_y u(0, 0)
    \qquad \text{and} \qquad
    \partial^L_x u(0, 0) = -2 = \partial^R_y u(0, 0).
  \end{equation*}
  Indeed, the criterion \eqref{Eq : strong specular tangent hyperplane criterion} is not equal with zero:
  \begin{equation*} 
    2\left(\sqrt{5} + 1 \right) - (-2)\left( 1 + \sqrt{5} \right) = 4\left(1+ \sqrt{5} \right) \neq 0.
  \end{equation*}
  
\end{example}

Fortunately, the strong specular tangent hyperplane analogously preserves the classical derivatives' property that the vector perpendicular to the surface of a function and partial derivatives are related.

\begin{theorem} \label{Thm : specularly normal vector}
  Let $\Omega$ be an open set in $\mathbb{R}^{2}$ and $a$ be a point in $\Omega$.
  If $u$ is has a strong specular tangent hyperplane to the graph of $u$ at $(a, u[a])$, then the specularly normal vector of $u$ at $a$ is 
  \begin{equation*} 
    \left( \partial^S_{x_1} u(a), \partial^S_{x_2} u(a), -1 \right).
  \end{equation*}
\end{theorem}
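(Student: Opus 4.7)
The plan is to build the strong specular tangent hyperplane explicitly from the two lines $\ell_1$ and $\ell_2$ of Lemma~\ref{Lem : lines joining the components of P}, read off a normal vector by a single cross product, and then reconcile the resulting two non-trivial components with formula~\ref{F1} for the specular derivative.

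First, because $u$ has a strong specular tangent hyperplane at $(a, u[a])$, Theorem~\ref{Thm : strong specular tangent hyperplane criterion} guarantees that the four points $p_1, q_1, p_2, q_2 \in \mathcal{P}(u, a)$ are coplanar with $\overline{a} = (a_1, a_2, u[a])$, and the plane they determine is precisely that strong specular tangent hyperplane. Since $\ell_1$ and $\ell_2$ lie on this plane, any nonzero vector orthogonal to both of their direction vectors is specularly normal to the surface of $u$ at $a$; it will then suffice to produce such a vector and scale it so that its third coordinate is $-1$.

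Second, write $\alpha_i := \partial_{x_i}^R u(a)$, $\beta_i := \partial_{x_i}^L u(a)$ and set
\[
A_i := \frac{1}{\sqrt{1+\alpha_i^2}} + \frac{1}{\sqrt{1+\beta_i^2}}, \qquad B_i := \frac{\alpha_i}{\sqrt{1+\alpha_i^2}} + \frac{\beta_i}{\sqrt{1+\beta_i^2}}.
\]
The symmetric forms in Lemma~\ref{Lem : lines joining the components of P} immediately give direction vectors $(A_1, 0, B_1)$ and $(0, A_2, B_2)$ for $\ell_1$ and $\ell_2$ respectively. Their cross product equals $(-A_2 B_1,\, -A_1 B_2,\, A_1 A_2)$, and dividing by the nonzero quantity $-A_1 A_2$ produces the candidate normal vector $(B_1/A_1,\, B_2/A_2,\, -1)$.

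Finally, I would verify that $B_i/A_i = \partial^S_{x_i} u(a)$ for $i \in \{1,2\}$. When $\alpha_i + \beta_i \neq 0$, after clearing the denominator $\sqrt{(1+\alpha_i^2)(1+\beta_i^2)}$, this reduces to the algebraic identity
\[
\frac{\alpha_i \sqrt{1+\beta_i^2} + \beta_i \sqrt{1+\alpha_i^2}}{\sqrt{1+\alpha_i^2} + \sqrt{1+\beta_i^2}} = A(\alpha_i, \beta_i),
\]
which is checked by cross-multiplying both sides; the key simplifications are $\sqrt{(1+\alpha_i^2)(1+\beta_i^2)} \cdot \sqrt{1+\beta_i^2} = (1+\beta_i^2)\sqrt{1+\alpha_i^2}$ and its symmetric counterpart, and these combine with the $-1$ term on the right to yield precisely the $\alpha_i^2 \sqrt{1+\beta_i^2}$ and $\beta_i^2 \sqrt{1+\alpha_i^2}$ appearing on the left. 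The degenerate case $\alpha_i + \beta_i = 0$ forces $\beta_i = -\alpha_i$, so $B_i = 0$ and thus $B_i/A_i = 0 = \partial^S_{x_i} u(a)$, matching the second branch of~\ref{F1}. The main obstacle is purely computational: organising the nested radicals in the identity above and separating the $\alpha_i + \beta_i = 0$ branch cleanly so no denominator is ever zero.
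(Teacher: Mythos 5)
Your proposal is correct and follows essentially the same route as the paper's proof: both extract the direction vectors of $\ell_1$ and $\ell_2$ from Lemma~\ref{Lem : lines joining the components of P}, take their cross product, normalize the third component to $-1$, and verify via the algebraic identity (splitting off the case $\alpha_i+\beta_i=0$) that the remaining components agree with \ref{F1}. The quantity $B_i/A_i$ you obtain is exactly the paper's $\frac{\alpha_i\sqrt{1+\beta_i^2}+\beta_i\sqrt{1+\alpha_i^2}}{\sqrt{1+\alpha_i^2}+\sqrt{1+\beta_i^2}}$ after clearing radicals, so the two arguments coincide.
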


\begin{proof} 
  Consider the lines $\ell_1$ and $\ell_2$ in Lemma \ref{Lem : lines joining the components of P}.
  Then the vector 
  \begin{equation} \label{Eq : the specular normal vector 1}
    \left( \frac{1}{\sqrt{1 + \alpha_1^2}} + \frac{1}{\sqrt{1 + \beta_1^2}}, 0, \frac{\alpha_1}{\sqrt{1 + \alpha_1^2}} + \frac{\beta_1}{\sqrt{1 + \beta_1^2}} \right) \times \left( 0, \frac{1}{\sqrt{1 + \alpha_2^2}} + \frac{1}{\sqrt{1 + \beta_2^2}}, \frac{\alpha_2}{\sqrt{1 + \alpha_2^2}} + \frac{\beta_2}{\sqrt{1 + \beta_2^2}} \right)
  \end{equation}
  is normal to the lines $\ell_1$ and $\ell_2$.
  Through the cumbersome calculation, one can find that the vector \eqref{Eq : the specular normal vector 1} is equal with 
  \begin{equation} \label{Eq : the specular normal vector 2}
    -K(A + B)(C + D)\left( \frac{\alpha_1 B + \beta_1 A}{A + B}, \frac{\alpha_2 D + \beta_2 C}{C + D}, -1 \right),
  \end{equation}
  where $A := \sqrt{1 + \alpha_1^2}$, $B := \sqrt{1 + \beta_1^2}$, $C := \sqrt{1 + \alpha_2^2}$, $D := \sqrt{1 + \beta_2^2}$, and $K := (ABCD)^{-1}$.
  Ignoring the coefficients $-K(A + B)(C + D)$ of the vector \eqref{Eq : the specular normal vector 2}, we have the vector 
  \begin{equation} \label{Eq : the specular normal vector 3}
    \left( \frac{\alpha_1 B + \beta_1 A}{A + B}, \frac{\alpha_2 D + \beta_2 C}{C + D}, -1 \right).
  \end{equation}
  
  Now, we claim that that \eqref{Eq : the specular normal vector 3} is equal with the vector $\left( \partial^S_{x_1} u(a), \partial^S_{x_2} u(a), -1 \right)$.
  To show this claim, we first show that 
  \begin{equation*} 
    \frac{\alpha_1 B + \beta_1 A}{A + B} = \partial^S_{x_1} u(a).
  \end{equation*}
  If $\alpha_1 + \beta_1 = 0$, then $A = B$ so that 
  \begin{equation*} 
    \frac{\alpha_1 A - \alpha_1 A}{A + A} - 0 = 0;
  \end{equation*}
  if $\alpha_1 + \beta_1 \neq 0$, then 
  \begin{equation*} 
    \frac{\alpha_1 B + \beta_1 A}{A + B} - \frac{\alpha_1 \beta_1 - 1 + AB}{\alpha_1 + \beta_1}  = \frac{-A + A^2 B - B + AB^2 - \alpha_1^2 B - \beta_1^2 A}{(\alpha_1 + \beta_1)(A + B)} = 0.
  \end{equation*}
  As the same way, for the variable $x_2$, one can show that 
  \begin{equation*} 
    \frac{\alpha_2 D + \beta_2 C}{C + D} = \partial^S_{x_2} u(a),
  \end{equation*}
  as required.
\end{proof}

If a function $u$ is weakly specularly differentiable on $\mathbb{R}^{2}$, it may not hold that there exists a weak specular tangent hyperplane which is perpendicular to the vector $\left( \partial^S_x u, \partial^S_y u, -1 \right)$.
In other words, the condition that a function $u$ has a strong specular tangent hyperplane at each point in $\mathbb{R}^{2}$ seems to be necessary in order to deal with the vector which is perpendicular to the surface of a function.
Here, we provide a counterexample as follows.

\begin{example} \label{Ex : strong stg is necessary}
  Consider the function $u : \mathbb{R}^{2} \to \mathbb{R}$ defined by 
  \begin{equation*} 
    u(x, y) = \frac{1}{2}(x + |x|) + \frac{1}{2}y + \frac{3}{2}|y|
  \end{equation*}
  for $(x, y) \in \mathbb{R}^{2}$.
  Then $u$ is weakly specularly differentiable at $(0, 0)$.
  In fact, one can calculate that $\partial^R_x u(0, 0) = 1$, $\partial^L_x u(0, 0) = 0$, $\partial^R_y u(0, 0) = 2$, $\partial^L_y u(0, 0) = -1$, and the vector
  \begin{equation} \label{Eq : strong stg is necessary - 1}
    \left( \partial^S_x u(0, 0), \partial^S_y u(0, 0), -1 \right)= \left( A(1, 0), A(2, -1), -1 \right) = \left( \sqrt{2} - 1, \sqrt{10} - 3, -1 \right).
  \end{equation}
  Then, the criterion \eqref{Eq : strong specular tangent hyperplane criterion} yields that 
  \begin{equation*} 
    1 \cdot \left(\sqrt{5} + \sqrt{2}\right) - 3\left(\sqrt{2} + 0\right) = \sqrt{5} - 2\sqrt{2} \neq 0,
  \end{equation*}
  which implies that $u$ has at least two weak specular tangent hyperplane at $(0, 0, 0)$.
  Now, we find whether $u$ has a weak specular tangent hyperplane which is perpendicular to the vector \eqref{Eq : strong stg is necessary - 1}.
  Note that 
  \begin{equation*} 
    \mathcal{P}(u, (0, 0)) = \left\{ \left( \frac{1}{\sqrt{2}}, 0, \frac{1}{\sqrt{2}}\right), (-1, 0, 0), \left( 0, \frac{1}{\sqrt{5}}, \frac{2}{\sqrt{5}}\right), \left( 0, -\frac{1}{\sqrt{2}}, \frac{1}{\sqrt{2}}\right) \right\}
  \end{equation*}
  and the four weak specular tangent hyperplanes are as follows:
  \begin{align*}
    \operatorname{wstg}_1 u :& \left( \sqrt{2} - 1 \right)x - \left( \sqrt{10} - \sqrt{5} - 2 \right)y + \sqrt{2} - 1 = z, \\
    \operatorname{wstg}_2 u :& \left( \sqrt{2} - 1 \right)x - \left( \sqrt{2} - 1 \right)y + \sqrt{2} - 1 = z, \\
    \operatorname{wstg}_3 u :& -\left( \sqrt{10} - 3 \right)x + \left( \sqrt{10} - 3 \right)y + \sqrt{5} - \sqrt{2} = z,  \\
    \operatorname{wstg}_4 u :& \left( \sqrt{5} - \sqrt{2} \right)x + \left( \sqrt{10} - 3 \right)y + \sqrt{5} - \sqrt{2} = z, 
  \end{align*}
  for $(x, y, z) \in \mathbb{R}^{3}$.
  Clearly, there does not exist a weak weak specular tangent hyperplane which is perpendicular to the vector \eqref{Eq : strong stg is necessary - 1}.
\end{example}

To answer the previous three questions, given a point in $\mathbb{R}^{3}$, a function $u$ having a strong specular tangent hyperplane at each point in $\mathbb{R}^{2}$ has a unique specularly normal vector which is specularly normal to the surface of $u$ and can be expressed as specularly partial derivatives of $u$.

\section{One-dimensional differential equations with specular derivatives}

In this section, we can construct the wave equation with specular derivatives in one-dimension. 
We deal with only infinite domain and start from homogeneous problems to nonhomogeneous problems.

\subsection{Transport equation with specular derivatives in infinite domain}

In order to solve a wave equation with specular derivatives, it is necessary to address a transport equation with specular derivatives.
Consider the homogeneous transport equation with specular derivatives on $\mathbb{R}$:
\begin{equation} \label{PDE: homo transport eq. w/ spd in 1-dim}
  \partial^S_t u + \partial^S_x u = 0, \quad (x, t) \in \mathbb{R} \times (0, \infty).  
\end{equation}

\begin{theorem} \label{Thm: homo transport eq. w/ spd in 1-dim}
  Assume that a function $u:\mathbb{R} \times (0, \infty) \to \mathbb{R}$ has a strong specular tangent hyperplane at each point $(x, t)\in \mathbb{R} \times (0, \infty)$.
  The general solution of \eqref{PDE: homo transport eq. w/ spd in 1-dim} is given by 
  \begin{equation} \label{Sol: homo transport eq. w/ spd in 1-dim}
    u(x, t) = h(x - t),
  \end{equation}
  where $h$ is an arbitrary specularly differentiable function of a single-variable.
\end{theorem}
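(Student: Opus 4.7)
The plan is to combine the specularly normal vector characterization of Theorem~\ref{Thm : specularly normal vector} with a one-dimensional method of characteristics, proving both that every $h(x-t)$ solves the equation and that every solution has this form.

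For sufficiency, fix a specularly differentiable $h\colon\mathbb{R}\to\mathbb{R}$ and set $u(x,t):=h(x-t)$. Since $t\mapsto x-t$ reverses orientation, a direct computation from the one-sided definitions gives
\[
  \partial^R_x u=h^{\spd}_+(x-t),\quad \partial^L_x u=h^{\spd}_-(x-t),\quad \partial^R_t u=-h^{\spd}_-(x-t),\quad \partial^L_t u=-h^{\spd}_+(x-t).
\]
Substituting into \ref{F1}, exploiting $A(\alpha,\beta)=A(\beta,\alpha)$ and Remark~\ref{Rmk : specular derivative with minus variable}, I obtain $\partial^S_t u=-\partial^S_x u$, which is exactly \eqref{PDE: homo transport eq. w/ spd in 1-dim}. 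Plugging the same semi-derivatives into \eqref{Eq : strong specular tangent hyperplane criterion} makes both sides reduce to $(h^{\spd}_+-h^{\spd}_-)\bigl(\sqrt{1+(h^{\spd}_-)^2}+\sqrt{1+(h^{\spd}_+)^2}\bigr)$, so the criterion is satisfied and $u$ has a strong specular tangent hyperplane at every point.

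For the converse, let $u$ solve \eqref{PDE: homo transport eq. w/ spd in 1-dim} with a strong specular tangent hyperplane everywhere. By Theorem~\ref{Thm : specularly normal vector}, the specularly normal vector at $(x_0,t_0)$ is $(\alpha,-\alpha,-1)$ with $\alpha:=\partial^S_x u(x_0,t_0)$, so $(1,1,0)$ lies in the strong specular tangent hyperplane at every point: the plane is flat along the characteristic direction. Fix $c\in\mathbb{R}$ and set $\varphi(s):=u(c+s,s)$. Once $\varphi^{\spd}_{\pm}\equiv 0$ is established, Theorem~\ref{Thm : the first form of FTC with spd} forces $\varphi$ to be constant, so $u(x,t)=h(x-t)$ where $h(c):=u(c+s_0,s_0)$ for any fixed $s_0>0$; the function $h$ then inherits specular differentiability from the restriction of $u$ to the line $t=s_0$.

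The main obstacle is the vanishing of $\varphi^{\spd}_{\pm}$, because there is no routine chain rule for specular derivatives of $u$ composed with $s\mapsto(c+s,s)$. I would handle this by expanding the secant $(u(c+s+\epsilon,s+\epsilon)-u[c+s,s])/\epsilon$ along a staircase, stepping first in $x$ and then in $t$, and comparing each step to the strong specular tangent hyperplane at $(c+s,s,u[c+s,s])$. Because that plane is flat in the $(1,1,0)$ direction by the PDE, the leading contributions from the two axial steps cancel and what remains is $o(\epsilon)$, producing $\varphi^{\spd}_{+}(s)=0$; the left-sided analogue is identical, and Theorem~\ref{Thm : the first form of FTC with spd} then concludes the proof.
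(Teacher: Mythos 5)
Your sufficiency half (verifying that every $u=h(x-t)$ with $h$ specularly differentiable solves \eqref{PDE: homo transport eq. w/ spd in 1-dim} via \ref{F1} and Remark \ref{Rmk : specular derivative with minus variable}, and checking the criterion \eqref{Eq : strong specular tangent hyperplane criterion}) is correct, and it is something the paper does not do at all: the paper's proof only derives the form of the solution and never verifies the converse. Your necessity half begins exactly as the paper's proof does, using Theorem \ref{Thm : specularly normal vector} to conclude that $(1,1,0)$ is tangent to the solution surface; the paper then passes to the characteristic system $dx/d^Ss=dt/d^Ss=1$, $du/d^Ss=0$ and reads off $u=h(x-t)$, whereas you restrict $u$ to a characteristic line and try to prove directly that $\varphi(s)=u(c+s,s)$ has vanishing specular derivative and then invoke Theorem \ref{Thm : the first form of FTC with spd}.

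That direct proof is where there is a genuine gap. The leading contributions of your two axial steps are governed by the phototangent rays, i.e.\ by the one-sided quantities $\alpha_1=\partial^R_x u$ and $\alpha_2=\partial^R_t u$ (resp.\ $\beta_1,\beta_2$): to first order the $x$-step contributes $\alpha_1\epsilon$ and the $t$-step contributes $\alpha_2\epsilon$, so the cancellation you need is $\alpha_1+\alpha_2=0$. But the PDE only gives $A(\alpha_1,\beta_1)+A(\alpha_2,\beta_2)=0$, and the flatness of the strong specular tangent hyperplane along $(1,1,0)$ is likewise a statement about the averaged slopes $\partial^S_x u$, $\partial^S_t u$, not about the one-sided slopes individually. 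Concretely, $\alpha_1=1$, $\beta_1=0$, $\alpha_2=0$, $\beta_2=-1$ gives $A(1,0)+A(0,-1)=(\sqrt2-1)+(1-\sqrt2)=0$ and also satisfies \eqref{Eq : strong specular tangent hyperplane criterion}, yet $\alpha_1+\alpha_2=1$, so the first-order terms of the staircase would not cancel; ruling out such configurations under the full hypothesis is exactly what your argument would have to do, and ``comparing each step to the tangent hyperplane'' does not do it, because the function's increments follow the phototangent, not the hyperplane. A secondary problem is that your second axial step uses a one-sided derivative at the shifted point $(c+s+\epsilon,s)$ rather than at the base point, so even granting the cancellation you need continuity or uniformity of the one-sided derivatives to make the remainder $o(\epsilon)$; nothing in the hypotheses supplies this. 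To close the argument you would either need to prove that the hypothesis forces the one-sided derivatives to pair off along characteristics, or fall back on the paper's characteristic-ODE formulation.
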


\begin{proof} 
  Let $u=u(x, t)$ be a solution of \eqref{PDE: homo transport eq. w/ spd in 1-dim}.
  Then the solution surface $F$ can be written by 
  \begin{equation*} 
    F(x, t, z) = u(x, t) - z = 0  
  \end{equation*}
  and the normal vector to the solution surface $F$ is 
  \begin{equation*} 
    D^S F(x, t, z) = \left( \partial^S_x u, \partial^S_t u, -1  \right)
  \end{equation*}
  due to Theorem \ref{Thm : specularly normal vector}.
  For the vector of coefficients of \eqref{PDE: homo transport eq. w/ spd in 1-dim}, one can calculate that 
  \begin{equation*} 
    (1, 1, 0) \innerprd D^S F(x, t, z) = \partial^S_x u + \partial^S_t u = 0,
  \end{equation*}
  which yields that $(1, 1, 0)$ is perpendicular to $D^S F(x, t, z)$.
  Since $D^S F(x, t, z)$ is normal to the surface $F$, $(1, 1, 0)$ is tangent to the surface $F$.

  Now, consider a parametrized curve
  \begin{equation*} 
    \gamma(s) = (x(s), t(s), z(s))
  \end{equation*}
  for $s$ in some open interval.
  Then the tangent to the curve is 
  \begin{equation*} 
    (1, 1, 0) = \frac{d\gamma}{d^S s}  = \left( \frac{dx}{d^S s}, \frac{dt}{d^S s}, \frac{dz}{d^S s} \right)
  \end{equation*}
  as the vector $(1, 1, 0)$ is tangent to the surface $F$.
  Hence, the parametric form of the characteristic equations is given by
  \begin{equation} \label{Syt: homo transport eq. w/ spd in 1-dim}
  \begin{cases} 
    \displaystyle \frac{dx}{d^S s} = 1  ,\\[0.25cm] 
    \displaystyle \frac{dt}{d^S s} = 1  ,\\[0.25cm] 
    \displaystyle \frac{du}{d^S s} = 0  ,
  \end{cases}
  \end{equation}
  which is ODE system with specular derivatives. 
  Since the right-hand sides of all equations in \eqref{Syt: homo transport eq. w/ spd in 1-dim} are constant, the system is ODE system with classical derivatives. 
  Hence, the solution of the ODE system \eqref{Syt: homo transport eq. w/ spd in 1-dim} is 
  \begin{equation*}
  \begin{cases} 
   x(s) = s + c_1 ,\\ 
   t(s) = s + c_2 ,\\ 
   u(s) = c_3, 
  \end{cases}
  \end{equation*}
  for some constants $c_1$, $c_2$, and $c_3$.
  Observe that 
  \begin{equation*} 
    x(s) - t(s) = c_1 - c_2 =: c_4
  \end{equation*}
  for some constant $c_4$.
  Since $u(x, t)$ is constant along the lines $x - t=c_4$, we obtain the general solution of \eqref{PDE: homo transport eq. w/ spd in 1-dim}  
  \begin{equation*} 
    u(x, t) = h(x - t),
  \end{equation*}
  for an arbitrary function $h$.
\end{proof}

\subsection{Wave equation with specular derivatives in infinite domain}

Consider the homogeneous wave equation with specular derivatives on $\mathbb{R}$:
\begin{equation} \label{PDE: homo wave eq. w/ spd in 1-dim}
  \partial^S_t u_t - \partial^S_x u_x = 0, \quad (x, t) \in \mathbb{R} \times (0, \infty).  
\end{equation}
To solve this equation, we introduce a constrained hypothesis as follows:
\begin{enumerate}[label=(H)] 
\rm\item\label{H} $(\partial_t - \partial_x)u$ has a strong specular tangent hyperplane at each point $(x, t) \in \mathbb{R} \times (0, \infty)$.  \label{(H)}
\end{enumerate}
Eventually, we will drop the hypothesis \ref{(H)} to solve the nonhomogeneous wave equation with specular derivatives on $\mathbb{R}$.

\begin{theorem}
  Assume the hypothesis \emph{\ref{(H)}}.
  The general solution of \eqref{PDE: homo wave eq. w/ spd in 1-dim} is given by 
  \begin{equation} \label{Sol: homo wave eq. w/ spd in 1-dim}
    u(x, t) = g(x + t) + h(x - t),
  \end{equation}
  where $g$ and $h$ are arbitrary twice specularly differentiable functions of a single-variable.
\end{theorem}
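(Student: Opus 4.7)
The plan is to mimic d'Alembert's classical factorization in the specular-derivative setting: I would factor the wave operator $\partial^S_t\partial_t - \partial^S_x\partial_x$ into the composition of $(\partial^S_t + \partial^S_x)$ with $(\partial_t - \partial_x)$, reducing \eqref{PDE: homo wave eq. w/ spd in 1-dim} to the transport equation already solved in Theorem \ref{Thm: homo transport eq. w/ spd in 1-dim}. Concretely, I set $v := u_t - u_x$; the hypothesis \ref{(H)} furnishes exactly the regularity needed to invoke Theorem \ref{Thm: homo transport eq. w/ spd in 1-dim} on $v$.

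Next, I would expand
\begin{equation*}
\partial^S_t v + \partial^S_x v = \bigl(\partial^S_t u_t - \partial^S_x u_x\bigr) + \bigl(\partial^S_x u_t - \partial^S_t u_x\bigr).
\end{equation*}
The first bracket vanishes by the wave equation \eqref{PDE: homo wave eq. w/ spd in 1-dim}, and the second bracket vanishes by the symmetry of second specular derivatives (Theorem \ref{Thm : Symmetry of second specular derivatives in 2d}). Thus $v$ satisfies $\partial^S_t v + \partial^S_x v = 0$, and Theorem \ref{Thm: homo transport eq. w/ spd in 1-dim} then yields a specularly differentiable single-variable function $\phi$ with $v(x, t) = \phi(x - t)$.

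It remains to solve the non-homogeneous first-order equation $u_t - u_x = \phi(x - t)$. Along the characteristic line $x + t = c$, parametrized by $s \mapsto (c - s, s)$, the equation reduces to the classical ordinary relation $\frac{d}{ds} u(c - s, s) = \phi(c - 2s)$; integrating and re-labeling gives $u(x, t) = G(x + t) - \tfrac{1}{2}\Phi(x - t)$, where $\Phi$ is any antiderivative of $\phi$ (whose existence and twice-specular differentiability follow from Theorem \ref{Thm : the first form of FTC with spd}) and $G$ is arbitrary. Setting $g := G$ and $h := -\Phi/2$ delivers \eqref{Sol: homo wave eq. w/ spd in 1-dim}. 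For the converse, a direct computation---using Remark \ref{Rmk : specular derivative with minus variable} to handle the $(x-t)$ argument and the linearity provided by Lemma \ref{Lem : closedness of specularly differentiablity}---verifies that any $u$ of this form satisfies \eqref{PDE: homo wave eq. w/ spd in 1-dim}.

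The principal obstacle I anticipate is the symmetry step: Theorem \ref{Thm : Symmetry of second specular derivatives in 2d} is formulated for $u \in S^2(\Omega)$, whereas the statement above imposes only hypothesis \ref{(H)}. One must therefore either tacitly demand $S^2$-regularity of $u$ on $\mathbb{R} \times (0, \infty)$, or verify that the existence of the derivatives appearing in \eqref{PDE: homo wave eq. w/ spd in 1-dim} combined with \ref{(H)} already supplies the continuity of the mixed specular derivatives $\partial^S_{tx} u$ and $\partial^S_{xt} u$ required by that theorem. A secondary nuisance is confirming that the antiderivative $\Phi$ built in the last step is itself twice specularly differentiable, so that $h = -\Phi/2$ genuinely belongs to the class claimed by the theorem.
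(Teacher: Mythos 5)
Your proposal is correct and follows essentially the same route as the paper: factor the operator as $(\partial^S_t + \partial^S_x)(\partial_t - \partial_x)$ using the symmetry of mixed specular derivatives, apply Theorem \ref{Thm: homo transport eq. w/ spd in 1-dim} to $v = u_t - u_x$, and finish by integrating the resulting classical transport equation (which you carry out more explicitly than the paper does). The regularity worry you flag is real but is equally present in the paper's own proof, which invokes Theorem \ref{Thm : Symmetry of second specular derivatives in 2d} while the theorem statement only assumes \ref{(H)}; the intended reading is that $u$ is sought in $S^2$, as the subsequent d'Alembert theorem makes explicit.
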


\begin{proof} 
  Thanks to Theorem \ref{Thm : Symmetry of second specular derivatives in 2d}, observing that 
  \begin{equation*} 
    \left( \partial^S_t + \partial^S_x \right) \left( \partial_t - \partial_x \right) u 
    = \left( \partial^S_t \partial_t + \partial^S_x \partial_t - \partial^S_t \partial_x - \partial^S_x \partial_x \right) u 
    = \left( \partial^S_t \partial_t + \partial^S_t \partial_x - \partial^S_t \partial_x - \partial^S_x \partial_x \right) u 
    = \left( \partial^S_t \partial_t - \partial^S_x \partial_x \right) u,
  \end{equation*}
  the equation can be rewritten as 
  \begin{equation*} 
    \left( \partial^S_t + \partial^S_x \right) \left( \partial_t - \partial_x \right) u = 0.
  \end{equation*}
  Writing 
  \begin{equation*} 
      v \equiv \left( \partial_t - \partial_x \right)u,
  \end{equation*}
  we have 
  \begin{equation*} 
    \left( \partial^S_t + \partial^S_x \right) v = 0,
  \end{equation*}
  which means that $v$ solves a first order transport equation with specular derivatives.
  By Theorem \ref{Thm: homo transport eq. w/ spd in 1-dim}, the general solution for $v$ is given by 
  \begin{equation*} 
    v = h(x - t)
  \end{equation*}
  for an arbitrary specularly differentiable function $h$ of a single variable.

  Now, it remains to solve 
  \begin{equation*} 
    u_t - u_x = h(x - t),
  \end{equation*}
  which is the transport equation with classical derivatives. 
  Hence, we conclude that \eqref{Sol: homo wave eq. w/ spd in 1-dim} is the general solution of \eqref{PDE: homo wave eq. w/ spd in 1-dim}.
\end{proof}

Next, consider the initial value problem for the homogeneous wave equation with specular derivatives on $\mathbb{R}$:
\begin{equation} \label{PDE: IVP homo wave eq. w/ spd in 1-dim}
\begin{cases} 
  \displaystyle
  \partial^S_t u_t - \partial^S_x u_x = 0, & (x, t) \in \mathbb{R} \times (0, \infty),\\ 
  u(x, 0) = \varphi(x), & x \in \mathbb{R},\\ 
  u_t(x, 0) = \psi(x), & x \in \mathbb{R} , 
\end{cases}
\end{equation}
where $\varphi \in S^{2}(\mathbb{R})$ and $\psi \in S^1(\mathbb{R})$ are given.
We seek a formula for $u \in S^2(\mathbb{R} \times [0, \infty))$ solving \eqref{PDE: IVP homo wave eq. w/ spd in 1-dim} in terms of $\varphi$ and $\psi$.
A famous formula, so-called \emph{d'Alembert's formula}, still works in the wave equation with specular derivatives.

\begin{theorem} \label{Thm: d'Alembert's formula}
  Assume the hypothesis \emph{\ref{(H)}}.
  The $S^2$-solution of \eqref{PDE: IVP homo wave eq. w/ spd in 1-dim} is given by 
  \begin{equation} \label{Sol : d'Alembert's formula}
    u(x, t) = \frac{1}{2} [\varphi(x + t) + \varphi(x - t)] + \frac{1}{2} \int_{x - t}^{x + t} \psi(s) ~ds,
  \end{equation}
  which is d'Alembert's formula.
\end{theorem}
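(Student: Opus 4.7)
The plan is to combine the general solution from the preceding theorem with the two initial data, using the regularity $S^2 \subset C^1$ to pass between specular and classical first derivatives where needed. First I would invoke the preceding theorem (under hypothesis \ref{(H)}) to write the $S^2$-solution as $u(x,t) = g(x+t) + h(x-t)$ for some twice specularly differentiable one-variable functions $g$ and $h$. Since $u \in S^2(\mathbb{R} \times [0,\infty))$ forces $u \in C^1$ by Theorem \ref{Thm : regularity of specular space}, and likewise $g,h \in S^2(\mathbb{R}) \subset C^1(\mathbb{R})$, the classical chain rule applied to $g$ and $h$ as one-variable functions yields $u_t(x,t) = g'(x+t) - h'(x-t)$.

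Next I would impose the two initial conditions. Setting $t=0$ in $u(x,0) = \varphi(x)$ gives
\begin{equation*}
  g(x) + h(x) = \varphi(x),
\end{equation*}
and setting $t=0$ in $u_t(x,0) = \psi(x)$ gives
\begin{equation*}
  g'(x) - h'(x) = \psi(x).
\end{equation*}
Because $\psi \in S^1(\mathbb{R})$ is continuous, the classical first form of FTC yields $g(x) - h(x) = \int_0^x \psi(s)\,ds + C$ for some constant $C \in \mathbb{R}$. Solving this $2 \times 2$ linear system for $g$ and $h$, substituting into $u(x,t) = g(x+t) + h(x-t)$, and using $\int_0^{x+t} - \int_0^{x-t} = \int_{x-t}^{x+t}$ to merge the two integrals produces the formula \eqref{Sol : d'Alembert's formula} (the constant $C$ cancels by symmetry).

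To complete the argument one should verify that the right-hand side of \eqref{Sol : d'Alembert's formula} really does belong to $S^2(\mathbb{R} \times [0,\infty))$ and satisfy the PDE. The initial conditions are immediate from direct substitution. For the PDE, one computes $u_t$ and $u_x$ classically and then applies $\partial^S_t$ and $\partial^S_x$, using the fact that classical and specular first derivatives coincide on $C^1$-functions. Under this identification, $\partial^S$ applied to $\varphi'(x \pm t)$ reduces to $\varphi^{\spd\spd}(x \pm t)$ (well-defined by $\varphi \in S^2$), and $\partial^S$ applied to $\psi(x \pm t)$ reduces to $\psi^\spd(x \pm t)$ (well-defined by $\psi \in S^1$). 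The $\varphi$-terms and $\psi$-terms then cancel pairwise between $\partial^S_t u_t$ and $\partial^S_x u_x$.

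The main obstacle I anticipate is not the algebraic derivation, which mirrors the classical argument almost verbatim, but rather the verification that the derivatives interact correctly with the measure-zero set of kinks of $\varphi^\spd$ and $\psi$. One must confirm that the relevant second specular derivatives lie in $S^0$ (so that the proper-integrand framework of Definition \ref{Def : proper integrands} applies), and check that the chain rule behaves as expected at such kinks when taking $\partial^S_t$ of composites $\varphi^\spd(x \pm t)$. The assumptions $\varphi \in S^2(\mathbb{R})$ and $\psi \in S^1(\mathbb{R})$ are precisely what is needed to make each of these steps go through.
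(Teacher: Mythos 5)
Your proposal follows essentially the same route as the paper: invoke the general solution $u(x,t)=g(x+t)+h(x-t)$ from the preceding theorem, impose the two initial conditions to get the linear system $g+h=\varphi$ and $g'-h'=\psi$, integrate, solve, and substitute back to obtain \eqref{Sol : d'Alembert's formula}. Your version is, if anything, slightly more careful than the paper's (which omits the sufficiency check that the resulting formula is an $S^2$-solution, and contains a sign typo in the equation $u_t(x,0)=g'(x)+h'(x)=\psi(x)$ that you correctly state as $g'(x)-h'(x)=\psi(x)$), so no substantive gap remains.
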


\begin{proof} 
  Let $u(x, t)$ be a $S^2$-solution which is given by 
  \begin{equation} \label{Thm: d'Alembert's formula - 1}
      u(x, t) = g(x + t) + h(x - t)
  \end{equation}
  for some functions $g$ and $h$.
  Evaluating $u$ at $t=0$ yields that 
  \begin{equation} \label{Thm: d'Alembert's formula - 2}
    u(x, 0) = g(x) + h(x) = \varphi(x)
  \end{equation}
  and differentiating this equation with respect to $x$ implies that 
  \begin{equation} \label{Thm: d'Alembert's formula - 3}
    \varphi'(x) = g'(x) + h'(x).
  \end{equation}
  Evaluating $u_t$ at $t=0$, we have 
  \begin{equation} \label{Thm: d'Alembert's formula - 4}
    u_t(x, 0) = g'(x) + h'(x) = \psi(x).
  \end{equation}
  The solution of the system of \eqref{Thm: d'Alembert's formula - 3} and \eqref{Thm: d'Alembert's formula - 4} is 
  \begin{equation*} 
    g'(x) = \frac{1}{2}\varphi'(x) + \frac{1}{2}\psi(x)
  \end{equation*}
  and integration of this equation yields that 
  \begin{equation} \label{Thm: d'Alembert's formula - 5}
    g(x) = \frac{1}{2}\varphi(x) + \frac{1}{2}\int_{0}^{x}\psi(s)~ds + c
  \end{equation}
  for some constant $c \in \mathbb{R}$.
  Combining \eqref{Thm: d'Alembert's formula - 2} and \eqref{Thm: d'Alembert's formula - 5}, we find that 
  \begin{equation} \label{Thm: d'Alembert's formula - 6}
    h(x) = \varphi(x) - g(x) =\frac{1}{2}\varphi(x) - \frac{1}{2}\int_{0}^{x}\psi(s)~ds - c.
  \end{equation}
  Finally, substituting \eqref{Thm: d'Alembert's formula - 5} and \eqref{Thm: d'Alembert's formula - 6} into \eqref{Thm: d'Alembert's formula - 1}, we obtain d'Alembert's formula \eqref{Sol : d'Alembert's formula}.
\end{proof}

Here, the conventional application of d'Alembert's formula is possible.

\begin{remark}
  Consider the initial and boundary value problem for the homogeneous wave equation with specular derivatives on the half-line $\mathbb{R}_+=\left\{ x>0 \right\}$:  
  \begin{equation} \label{PDE: IVP, BVP homo wave eq. w/ spd in 1-dim}
    \begin{cases} 
      \displaystyle
      \partial^S_t u_t - \partial^S_x u_x = 0, & (x, t) \in \mathbb{R}_+ \times (0, \infty),\\ 
      u(x, 0) = \varphi(x), & x \in \mathbb{R}_+,\\ 
      u_t(x, 0) = \psi(x), & x \in \mathbb{R}_+ , \\
      u(0, t) = 0, & t \in (0, \infty),
    \end{cases}
  \end{equation}
  where $\varphi \in S^{2}(\mathbb{R})$ and $\psi \in S^1(\mathbb{R})$ are given, with $\varphi(0) = \psi(0) = 0$.
  Assume the hypothesis \ref{(H)}.
  The reflection method implies that for $x \geq 0$ and $t \geq 0$
  \begin{equation} \label{Sol : IVP, BVP homo wave eq. w/ spd in 1-dim}
    u(x, t) =
    \begin{cases} 
      \displaystyle
      \frac{1}{2} [\varphi(x + t) + \varphi(x - t)] + \frac{1}{2} \int_{x - t}^{x + t} \psi(s) ~ds & \text{if } x \geq t \geq 0,\\[0.4cm] 
      \displaystyle
      \frac{1}{2} [\varphi(x + t) - \varphi(t - x)] + \frac{1}{2} \int_{t - x}^{x + t} \psi(s) ~ds & \text{if } t \geq x \geq 0, 
    \end{cases}    
  \end{equation}
  is a solution of \eqref{PDE: IVP, BVP homo wave eq. w/ spd in 1-dim}.
\end{remark}

Here, we provide an example of the initial and boundary value problem \eqref{PDE: IVP, BVP homo wave eq. w/ spd in 1-dim} and its $S^2$-solution.

\begin{example}
  Consider 
  \begin{equation} \label{PDE: IVP homo wave eq. w/ spd in 1-dim example}
    \begin{cases} 
      \displaystyle
      \partial^S_t u_t - \partial^S_x u_x = 0, & (x, t) \in (0, \infty) \times (0, \infty),\\[0.1cm]  
      \displaystyle
      u(x, 0) = q(x-1) + \frac{1}{2}x^2 + \frac{1}{2}, & x \in [0, \infty),\\[0.1cm] 
      u_t(x, 0) = |x - 1| - 1, & x \in [0, \infty) , \\
      u(0, t) = 0, & t \in (0, \infty),
    \end{cases}
    \end{equation}
    where $q$ is defined as in \eqref{Fnc : q}.
    Note that this problem satisfies the condition:
    \begin{equation*} 
      \varphi(0) = \psi(0) = \varphi^{\spd \spd}(0) = 0,
    \end{equation*}
    where $\varphi(x) := u(x, 0)$ and $\psi(x) := u_t(x, 0)$ for $x \in [0, \infty)$.

    The solution of \eqref{PDE: IVP homo wave eq. w/ spd in 1-dim example} is 
    \begin{equation*}
      u(x, t) =
      \begin{cases} 
      \displaystyle q(x + t - 1) + \frac{1}{2}x^2 + \frac{1}{2}t^2 - t + \frac{1}{2} & \text{if } x \geq t \geq 0,\\[0.1cm]
      \displaystyle q(t + x - 1) - q(t - x - 1) + xt - x & \text{if } t \geq x \geq 0,
      \end{cases}
    \end{equation*}
    thanks to d'Alembert's formula \eqref{Sol : IVP, BVP homo wave eq. w/ spd in 1-dim}.
    Clearly, this solution meets the boundary condition, i.e., $u = 0$ if $x = 0$. 
    From now on, we check whether this solution satisfies \eqref{PDE: IVP homo wave eq. w/ spd in 1-dim example}. 
    Figure \ref{Fig : Regions of interest separated by three lines} illustrates regions of interest separated by three lines: $t-x=1$, $t-x=0$, and $t+x=1$.

    For $x \geq t \geq 0$, we find that 
    \begin{equation*}
    u_x(x, t) 
    = |x + t - 1| + x
    =
    \begin{cases} 
    2x + t - 1 & \text{if } x + t \geq 1,\\ 
    -t + 1 & \text{if } x + t < 1
    \end{cases}
    \end{equation*}
    and 
    \begin{equation*}
      u_t(x, t) 
      = |x + t - 1| + t - 1
      =
      \begin{cases} 
      x + 2t - 2 & \text{if } x + t \geq 1,\\ 
      -x & \text{if } x + t < 1.
      \end{cases}
      \end{equation*}
    Then 
    \begin{align*}
      \partial^S_x u_x(x, t) 
      &=
      \begin{cases} 
      2 & \text{if } x + t > 1,\\
      A(2, 0) & \text{if } x + t = 1,\\
      0 & \text{if } x + t < 1
      \end{cases} \\
      &= \partial^S_t u_t(x, t)  .
    \end{align*}
    Next, for $t \geq x \geq 0$, observe that 
    \begin{equation*} 
      \left\{ (x, t) \in \mathbb{R}^{2} : t + x < 1 \leq t - x \text{ and } t \geq x \geq 0 \right\} = \varnothing;
    \end{equation*}
    we do not have to consider the case $t + x < 1$ and $t - x \geq 1$.
    Hence, we have 
    \begin{equation*}
      u_x(x, t) 
      = |t + x - 1| + |t - x - 1| + t - 1
      =
      \begin{cases} 
      3t - 3 & \text{if } t + x \geq 1, t - x \geq 1,\\ 
      2x + t - 1 & \text{if } x + t \geq 1, t - x < 1,\\ 
      -t + 1 & \text{if } x + t < 1, t - x < 1,
      \end{cases}
    \end{equation*}
    and 
    \begin{equation*}
      u_t(x, t) 
      = |t + x - 1| - |t - x - 1| + x
      =
      \begin{cases} 
      3x & \text{if } t + x \geq 1, t - x \geq 1,\\ 
      x + 2t - 2 & \text{if } x + t \geq 1, t - x < 1,\\ 
      -x & \text{if } x + t < 1, t - x < 1.
      \end{cases}
    \end{equation*}
    Then 
    \begin{align*}
      \partial^S_x u_x(x, t) 
      &=
      \begin{cases} 
      0 & \text{if } t + x > 1, t - x > 1,\\
      A(2, 0) & \text{if } t - x = 1,\\
      2 & \text{if } x + t > 1, t - x < 1,\\
      A(2, 0) & \text{if } t + x = 1,\\
      0 & \text{if } x + t < 1, t - x < 1 
      \end{cases} \\
      &= \partial^S_t u_t(x, t).
    \end{align*}
    Lastly, one can check that $u$, $u_t$, $u_x$ are continuous along $\left\{ t = x \right\}$ and that $\partial^S_x u_x$ and $\partial^S_t u_t$ satisfy Theorem \ref{Thm : strong specular tangent hyperplane criterion}.

    \begin{figure}[H] 
    \centering 
    \includegraphics[width=0.35\textwidth]{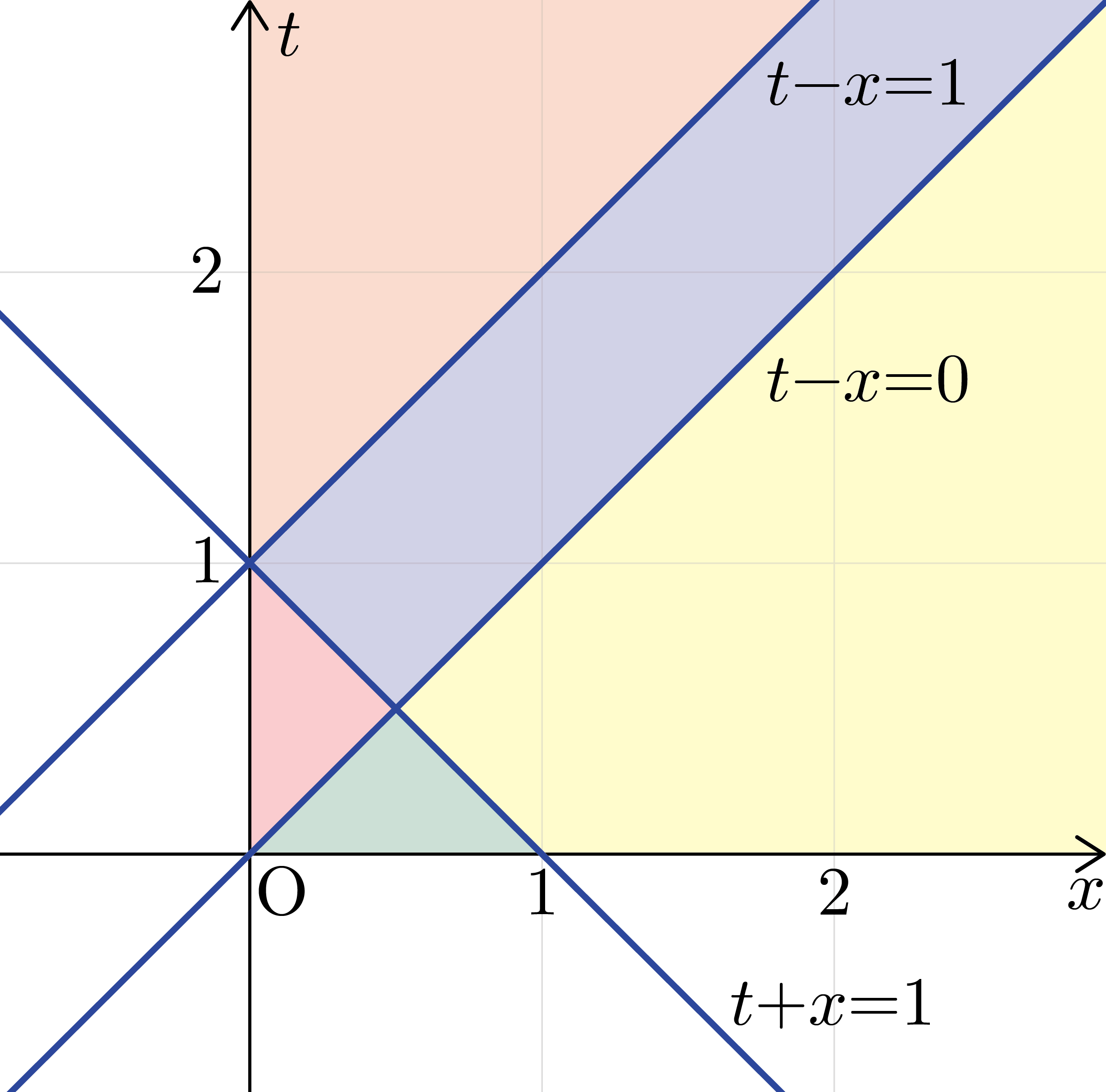} 
    \caption{Regions of interest separated by three lines}
    \label{Fig : Regions of interest separated by three lines}
    \end{figure}
    
\end{example}

From now on, consider the initial value problem for the nonhomogeneous wave equation with specular derivatives on $\mathbb{R}$:
\begin{equation} \label{PDE: IVP nonhomo wave eq. w/ spd in 1-dim}
  \begin{cases} 
    \displaystyle
    \partial^S_t u_t - \partial^S_x u_x = f(x, t), & (x, t) \in \mathbb{R} \times (0, \infty),\\ 
    u(x, 0) = \varphi(x), & x \in \mathbb{R},\\ 
    u_t(x, 0) = \psi(x), & x \in \mathbb{R} ,
  \end{cases}
  \end{equation}
where $\varphi \in S^{2}(\mathbb{R})$, $\psi \in S^1(\mathbb{R})$, and $f \in S^0(\mathbb{R}\times [0, \infty))$ are given.
We seek a solution $u \in S^2(\mathbb{R}\times [0, \infty))$ of \eqref{PDE: IVP nonhomo wave eq. w/ spd in 1-dim}.
In the classical derivative sense, there are three methods solving \eqref{PDE: IVP nonhomo wave eq. w/ spd in 1-dim}: reduction to first-order equations, using Green's theorem, and Duhamel's principle.
Among these methods, we try to solve \eqref{PDE: IVP nonhomo wave eq. w/ spd in 1-dim} by extending Green's theorem to the specular space.

The proof of Green's theorem with specular derivatives is inspired by that of Green's theorem with classical derivatives in \cite{2012_Colley_BOOK}.
It is needs to recall the following basic definitions (see \cite[Chapter 5]{2012_Colley_BOOK}):

\begin{definition}
  Let $E$ be a subset of $\mathbb{R}^{2}$.
  We say $E$ is an \emph{elementary region} in the plane if it can be described as one of the following three types:
  \begin{enumerate}[label=(\roman*)] 
  \rm\item \emph{type} \RN{1}: $E =\left\{ (x, y) : \omega_1(x) \leq y \leq \omega_2(x), a \leq x \leq b \right\}$,
  where $\omega_1$ and $\omega_2$ are continuous on $[a, b]$.
  \rm\item \emph{type} \RN{2}: $E =\left\{ (x, y) : \omega_3(y) \leq x \leq \omega_4(y), c \leq y \leq d \right\}$,
  where $\omega_3$ and $\omega_4$ are continuous on $[c, d]$.
  \rm\item \emph{type} \RN{3}: $E$ is of both type \RN{2} and type \RN{3}.
  \end{enumerate}  
\end{definition}

Here, we state the restricted Green's theorem with specular derivatives since it is enough to solve \eqref{PDE: IVP nonhomo wave eq. w/ spd in 1-dim}; to achieve literally generalization, the case that a domain $\overline{\Omega}$ in Theorem \ref{Thm : Green's Theorem with specular derivatives} is not an elementary region of type \RN{3} should be considered. 

\begin{theorem} \label{Thm : Green's Theorem with specular derivatives}
  \emph{(Green's Theorem with specular derivatives)} 
  Let $\Omega$ be an open, bounded subset of $\mathbb{R}^{2}$ whose its boundary $\partial \Omega$ consists of finitely many simple, closed, piecewise $C^1$ curves.
  Assume  $\overline{\Omega}$ is an elementary region of type \RN{3}.
  Let $F(x, y) = P(x, y) e_1 + Q(x, y) e_2$  be a vector field of class $S^1$ throughout $\overline{\Omega}$. 
  Then 
  \begin{equation*} 
    \iint_{\overline{\Omega}}\left(\frac{\partial P}{\partial^S x}-\frac{\partial Q}{\partial^S y}\right) ~dx dy = \oint_{\partial \Omega} P ~dy + Q ~dx,
  \end{equation*}
  where the boundary $\partial \Omega$ traverses in the counterclockwise direction.
\end{theorem}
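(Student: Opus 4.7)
The plan is to mirror the classical proof from \cite{2012_Colley_BOOK} indicated by the authors: exploiting the type \RN{3} structure of $\overline{\Omega}$, I would split the left-hand side into its $P$- and $Q$-parts and evaluate each using the opposite elementary-region description, so that the inner slice integral collapses under the first form of the fundamental theorem of calculus with specular derivatives. Writing
\[
\overline{\Omega} = \{(x,y) : \omega_1(x) \le y \le \omega_2(x),\ a \le x \le b\} = \{(x,y) : \omega_3(y) \le x \le \omega_4(y),\ c \le y \le d\},
\]
the theorem reduces to establishing the two identities
\[
\iint_{\overline{\Omega}} \partial^S_x P ~dx dy = \oint_{\partial\Omega} P ~dy \qquad \text{and} \qquad -\iint_{\overline{\Omega}} \partial^S_y Q ~dx dy = \oint_{\partial\Omega} Q ~dx,
\]
whose sum yields the stated formula.

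For the first identity I would iterate using the type \RN{2} description and apply Theorem \ref{Thm : the first form of FTC with spd} to the inner integral over $x$. Since $F \in S^1(\overline{\Omega})$, the function $P$ is continuous on $\overline{\Omega}$ and $\partial^S_x P \in S^0(\overline{\Omega})$, which supplies the slicewise continuity of $x \mapsto P(x, y_0)$ together with the piecewise continuity and the specular-derivative identification needed on $[\omega_3(y_0), \omega_4(y_0)]$. Theorem \ref{Thm : the first form of FTC with spd} then produces
\[
\int_{\omega_3(y_0)}^{\omega_4(y_0)} \partial^S_x P(x, y_0) ~dx = P(\omega_4(y_0), y_0) - P(\omega_3(y_0), y_0),
\]
and a direct parametrization of the counterclockwise boundary identifies the $y$-integral of this difference with $\oint_{\partial\Omega} P ~dy$: the horizontal top and bottom arcs contribute nothing because $dy = 0$, the right arc $x = \omega_4(y)$ traversed upward yields the positive term, and the left arc $x = \omega_3(y)$ traversed downward yields the negative term. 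The companion identity for $Q$ follows by the symmetric argument using the type \RN{1} description and integrating $-\partial^S_y Q$ first in $y$.

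The principal technical obstacle will be passing from the two-dimensional hypothesis $\partial^S_x P \in S^0(\overline{\Omega})$ to the one-dimensional, slicewise hypotheses of Theorem \ref{Thm : the first form of FTC with spd}. By definition, $\partial^S_x P$ is two-dimensional piecewise continuous on $\overline{\Omega}$, so its discontinuities lie on finitely many singular lines $\hbar_1, \ldots, \hbar_m$. For any horizontal slice $\{y = y_0\}$ whose height does not coincide with one of the finitely many horizontal singular lines among the $\hbar_j$, the intersections of these lines with the slice form a finite singular set, so $x \mapsto \partial^S_x P(x, y_0)$ becomes one-dimensional piecewise continuous and inherits the properness identity from $S^0$ pointwise. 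The excluded $y_0$-values form a finite, hence measure-zero, set that does not affect the outer integral in $y$, and summing the two identities then completes the proof.
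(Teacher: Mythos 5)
Your proposal is correct and follows essentially the same route as the paper: decompose the claim into the two identities $\iint_{\overline{\Omega}} \partial^S_x P \,dA = \oint_{\partial\Omega} P\,dy$ and $-\iint_{\overline{\Omega}} \partial^S_y Q \,dA = \oint_{\partial\Omega} Q\,dx$, evaluate each iterated integral slicewise via Theorem \ref{Thm : the first form of FTC with spd} using the type \RN{2} and type \RN{1} descriptions respectively, match against the parametrized boundary integrals, and add. Your extra paragraph justifying the passage from two-dimensional piecewise continuity of $\partial^S_x P$ to the one-dimensional slicewise hypotheses of the FTC (excluding the measure-zero set of slices lying on singular lines) is a point the paper glosses over, and is a welcome addition rather than a deviation.
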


\begin{proof} 
  Recalling that a type \RN{3} is one that of both type \RN{1} and type \RN{2} elementary regions, $\overline{\Omega}$ can be described in two way:
  \begin{align*}
    \overline{\Omega} 
    &= \left\{ (x, y) \in \mathbb{R}^{2} : \omega_1(x) \leq y \leq \omega_2(x), a \leq x \leq b \right\} \\
    &= \left\{ (x, y) \in \mathbb{R}^{2} : \omega_3(y) \leq x \leq \omega_4(y), c \leq y \leq d \right\},
  \end{align*}
  where $\omega_i$ is continuous and piecewise $C^1$ for each $i = 1, 2, 3, 4$ (see Figure \ref{Fig : Two viewpoints on Omega a type 3}).
  Note that the first description and the second description of $\overline{\Omega}$ are is as a type \RN{1} and a type \RN{2} elementary region, respectively.

  \begin{figure}[H] 
  \centering 
  \includegraphics[width=0.8\textwidth]{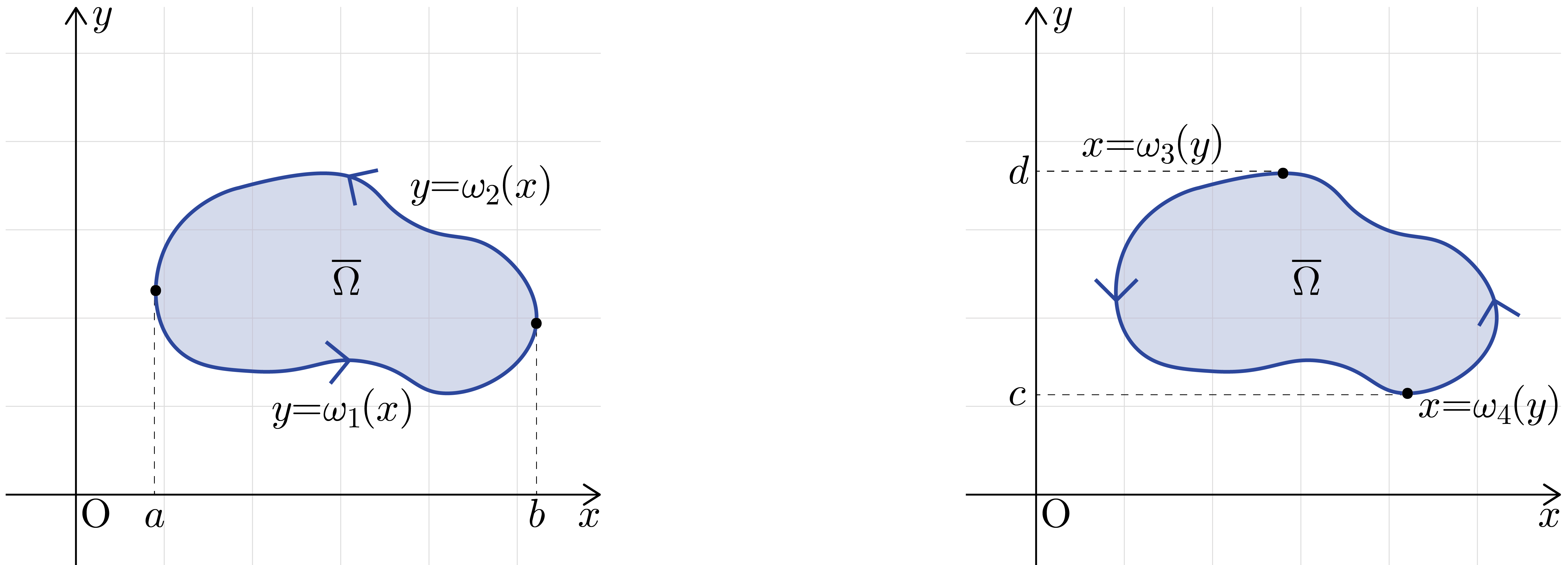} 
  \caption{Two viewpoints on $\overline{\Omega}$ a type \RN{3}}
  \label{Fig : Two viewpoints on Omega a type 3}
  \end{figure}
  
  First, we view the case $\overline{\Omega}$ as a type \RN{1}.
  Then $\partial \Omega$ consists of a lower curve $\Gamma_1$ and an upper curve $\Gamma_2$, which can be parametrized as follows:
  \begin{equation*} 
    \Gamma_1 : 
    \begin{cases} 
     x = t,\\ 
     y = \omega_1(t), 
    \end{cases}
    \qquad \text{and} \qquad
    \Gamma_2 :
    \begin{cases} 
      x = t,\\ 
      y = \omega_2(t),
    \end{cases}  
  \end{equation*}
  for $a \leq t \leq b$, where $\Gamma_1$ is oriented counterclockwise and $\Gamma_1$ is oriented clockwise.
  Now, we claim that 
  \begin{equation} \label{Thm : Green's Theorem with specular derivatives - 1}
    \iint_{\overline{\Omega}}- \frac{\partial Q}{\partial^S y}~dA = \oint_{\partial \Omega} Q~dx
  \end{equation}
  and prove it by evaluating each sides. 
  On the one hand, one can find that 
  \begin{align*}
    \iint_{\overline{\Omega}}- \frac{\partial Q}{\partial^S y}(x, y) ~dA 
    &= \int_a^b \int_{\omega_1(x)}^{\omega_2(x)} -\frac{\partial Q}{\partial^S y}(x, y) ~dy dx \\
    &= \int_a^b \left[ Q(x, \omega_1(x)) - Q(x, \omega_2(x)) \right] ~dx
  \end{align*}
  by Theorem \ref{Thm : the first form of FTC with spd}.
  On the other hand, we compute 
  \begin{align*}
    \oint_{\partial \Omega} Q(x, y)~dx 
    &= \int_{\Gamma_1} Q(x, y)~dx - \int_{\Gamma_2} Q(x, y)~dx \\
    &= \int_a^b Q(t, \omega_1(t)) ~dt - \int_a^b Q(t, \omega_2(t))~dt \\
    &= \int_a^b \left[Q(t, \omega_1(t)) - Q(t, \omega_2(t)) \right] ~dt.
  \end{align*}
  Combining these computations, the claim \eqref{Thm : Green's Theorem with specular derivatives - 1} is proved, in the case $\Omega$ is a type \RN{1}. 
  
  Second, if we view $\overline{\Omega}$ as a type \RN{2}, one can prove that 
  \begin{equation} \label{Thm : Green's Theorem with specular derivatives - 2}
    \iint_{\overline{\Omega}} \frac{\partial P}{\partial^S x} ~dA = \oint_{\partial \Omega} P ~dy,
  \end{equation}
  by proving both line integral and the double integral is equal with 
  \begin{equation*} 
    \int_c^d \left[ P(\omega_3(y), y) - P(\omega_4(y), y) \right]~dy
  \end{equation*}
  in an analogous way.

  Since $\overline{\Omega}$ is simultaneously of type \RN{1} and type \RN{2}, adding \eqref{Thm : Green's Theorem with specular derivatives - 1} and \eqref{Thm : Green's Theorem with specular derivatives - 2} yields that 
  \begin{align*}
    \oint_{\partial \Omega} P(x, y)~dx + Q(x, y)~dy &= \oint_{\partial \Omega} P~dx + \oint_{\partial \Omega} Q~dy \\
    &= \iint_{\overline{\Omega}} \frac{\partial P}{\partial^S x} ~dA - \iint_{\overline{\Omega}} \frac{\partial Q}{\partial^S y}~dA \\
    &= \iint_{\overline{\Omega}} \left( \frac{\partial P}{\partial^S x} - \frac{\partial Q}{\partial^S y}\right) ~dA,
  \end{align*}
  as required.
\end{proof}

As one can see in the proof, the condition that a given force is proper is required to use FTC.
Now, we can find the $S^2$-solution of \eqref{PDE: IVP nonhomo wave eq. w/ spd in 1-dim} by using the Green's theorem with specular derivatives.

\begin{theorem} \label{Thm: IVP nonhomo wave eq. w/ spd in 1-dim}
  The $S^2$-solution of \eqref{PDE: IVP nonhomo wave eq. w/ spd in 1-dim} is given by 
  \begin{equation} \label{Sol : IVP nonhomo wave eq. w/ spd}
    u(x, t) = \frac{1}{2} [\varphi(x + t) + \varphi(x - t)] + \frac{1}{2} \int_{x-t}^{x+t} \psi(s) ~ds + \frac{1}{2} \int_{0}^{t} \int_{x-t+s}^{x+t-s} f(y, s) ~dy ds.
  \end{equation}
\end{theorem}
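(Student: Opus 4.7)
The plan is to mimic the classical Green's-theorem derivation of the nonhomogeneous d'Alembert formula, using Theorem \ref{Thm : Green's Theorem with specular derivatives} in place of the classical Green's theorem. The geometric input will be the characteristic triangle $T = T(x,t)$ with vertices $A=(x-t,0)$, $B=(x+t,0)$, and $C=(x,t)$, which is an elementary region of type III and is bounded by the base $L_0$ on the $s$-axis at $s=0$ together with the two characteristic segments $L_1:\, y+s=x+t$ (from $B$ to $C$) and $L_2:\, y-s=x-t$ (from $C$ to $A$). Since $u\in S^2$ and so by Theorem \ref{Thm : regularity of specular space} the first classical partials $u_y$, $u_s$ exist and are continuous on $\overline{T}$, and since $\partial^S_y u_y$, $\partial^S_s u_s$ are in $S^0$, the vector field $F = -u_y\,e_1 - u_s\,e_2$ is of class $S^1$ on $\overline{T}$, so Theorem \ref{Thm : Green's Theorem with specular derivatives} applies.

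First I would set $P = -u_y$ and $Q = -u_s$, so that $\partial^S_y P - \partial^S_s Q = \partial^S_s u_s - \partial^S_y u_y = f$ by the PDE. Green's theorem with specular derivatives then gives
\begin{equation*}
\iint_{T} f(y,s)\,dy\,ds \;=\; \oint_{\partial T}(-u_y)\,ds + (-u_s)\,dy \;=\; -\oint_{\partial T} u_y\,ds + u_s\,dy,
\end{equation*}
with $\partial T$ traversed counterclockwise as $A\to B\to C\to A$.

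Next I would evaluate the line integral piece by piece. On the base $L_0$ we have $s=0$ and $ds=0$, so its contribution is $\int_{x-t}^{x+t}\psi(y)\,dy$, using the initial condition $u_s(y,0)=\psi(y)$. On $L_1$, parametrized by $s\in[0,t]$ with $y=x+t-s$, we have $dy=-ds$, so the integrand becomes $(u_y - u_s)\,ds = -\tfrac{d}{ds}\,u(x+t-s,s)\,ds$ by the classical chain rule (valid since $u\in C^1$). Hence the $L_1$ contribution telescopes to $\varphi(x+t) - u(x,t)$. An identical computation along $L_2$, parametrized by $s$ running from $t$ down to $0$ with $y=x-t+s$, yields $\varphi(x-t) - u(x,t)$. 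Summing and solving algebraically for $u(x,t)$ produces
\begin{equation*}
u(x,t) = \tfrac{1}{2}[\varphi(x+t) + \varphi(x-t)] + \tfrac{1}{2}\int_{x-t}^{x+t}\psi(y)\,dy + \tfrac{1}{2}\iint_{T} f(y,s)\,dy\,ds,
\end{equation*}
and writing the double integral in iterated form with $s\in[0,t]$ and $y\in[x-t+s,\,x+t-s]$ gives \eqref{Sol : IVP nonhomo wave eq. w/ spd}.

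The main obstacle, and the step I would write most carefully, is the verification that Green's theorem in the specular sense can legitimately be applied in this setup: one needs $P=-u_y$ and $Q=-u_s$ to be in $S^1(\overline{T})$ (so that Theorem \ref{Thm : Green's Theorem with specular derivatives} applies) and $f=\partial^S_s u_s-\partial^S_y u_y$ to be in $S^0(\overline{T})$ so that Theorem \ref{Thm : the first form of FTC with spd} may be invoked inside the proof of the specular Green identity. Both follow from $u\in S^2(\mathbb{R}\times[0,\infty))$ together with the chain $S^2\subset C^1$ provided by Theorem \ref{Thm : regularity of specular space} and the assumption $f\in S^0$. The only other delicate point is the legitimacy of differentiating $u$ along the characteristic lines by the classical chain rule; this is again covered by $u\in C^1(\overline{T})$, which is why the hypothesis \ref{(H)} of the homogeneous theorem can now be dropped.
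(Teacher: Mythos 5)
Your proposal is correct and follows essentially the same route as the paper: integrate the equation over the characteristic triangle, apply Theorem \ref{Thm : Green's Theorem with specular derivatives} to convert the double integral into a boundary integral over the base and the two characteristic segments, recognize the integrand along the characteristics as an exact differential of $u$, and solve algebraically for $u(x,t)$. Your extra care in verifying that $-u_y,-u_s$ form an $S^1$ vector field on the type~\RN{3} triangle (via Theorem \ref{Thm : regularity of specular space}) is a welcome explicitness but not a departure from the paper's argument.
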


\begin{proof} 
  Fix a point $(x_0, t_0)$ in $\Omega$.
  Set $\bigtriangleup := \left\{ (x, t) : 0 \leq t \leq t_0, |x - x_0| \leq |t - t_0| \right\}$, the domain of dependence of the point $(x_0, t_0)$.
  Integrating the wave equation $\partial^S_t u_t - \partial^S_x u_x = f(x, t)$ over $\bigtriangleup$, we obtain that 
  \begin{equation} \label{Thm: IVP nonhomo wave eq. w/ spd in 1-dim - 1}
    -\iint_{\bigtriangleup} \left( \frac{\partial u_x}{\partial^S x} - \frac{\partial u_t}{\partial^S t} \right) ~dx dt = \iint_{\bigtriangleup} f(x, t) ~dx dt.
  \end{equation}

  Next, assume the boundary $\partial \bigtriangleup$ of $\bigtriangleup$ traverses in the counterclockwise direction and split $\partial \bigtriangleup$ into three straight line segments as follows:
  \begin{align*}
    \Gamma_0 &:= \left\{ (x, 0) : x_0 - t_0 \leq x \leq x_0 + t_0 \right\}, \\
    \Gamma_1 &:= \left\{ (x, t) : x + t = x_0 + t_0\right\}, \\
    \Gamma_2 &:= \left\{ (x, t) : x - t = x_0 - t_0 \right\} .
  \end{align*}

  \begin{figure}[H] 
  \centering 
  \includegraphics[scale=1]{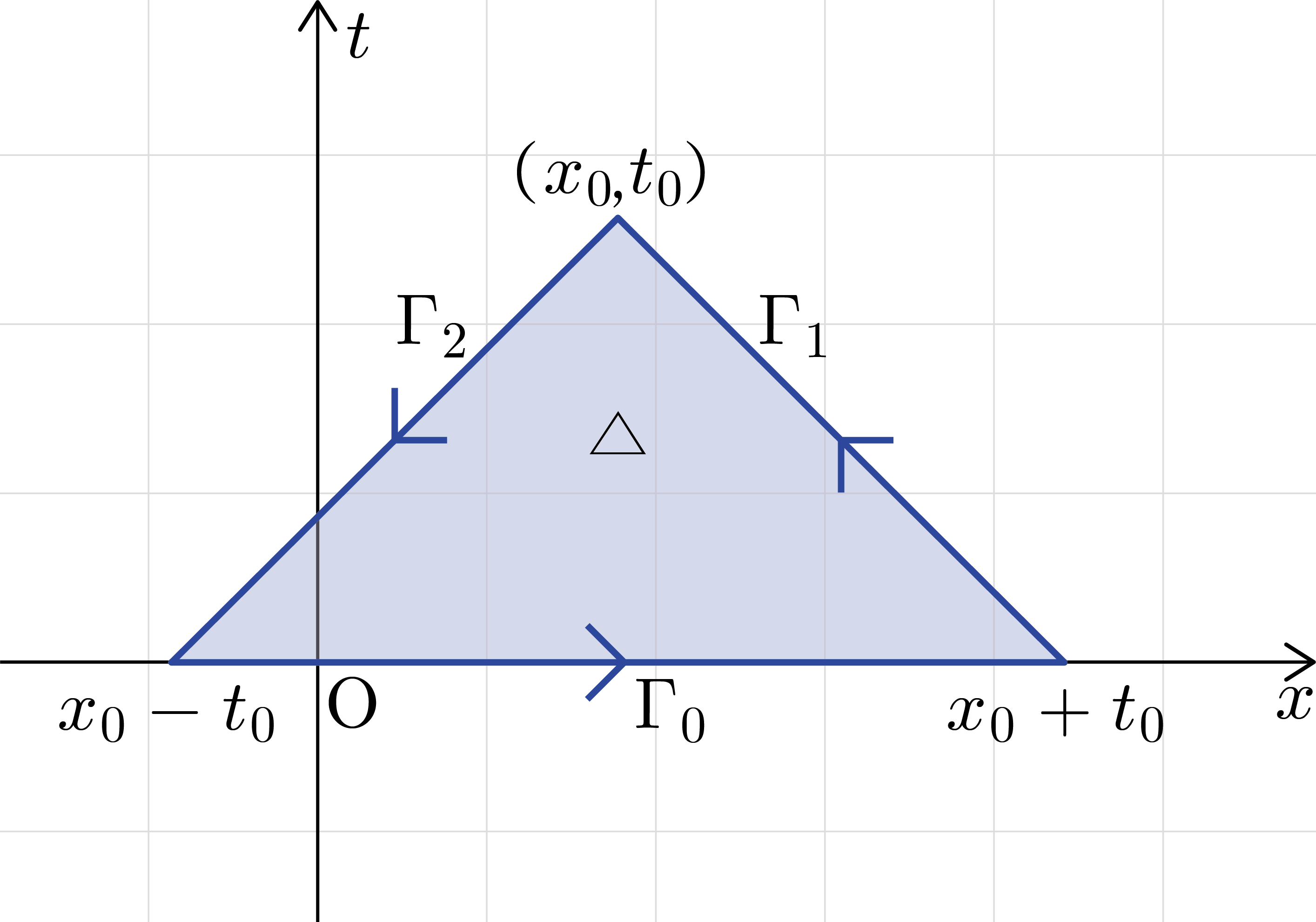} 
  \caption{The domain of dependence of the point $(x_0, t_0)$}
  \label{Fig : The domain of dependence of the point (x_0, t_0)}
  \end{figure}
  
  \noindent Note that
  \begin{equation} \label{Thm: IVP nonhomo wave eq. w/ spd in 1-dim - 2}
      \frac{dx}{dt} = -1 \quad \text{on~} \omega_1
      \qquad \text{and} \qquad
      \frac{dx}{dt} = 1 \quad \text{on~} \omega_2.
  \end{equation}
  By Theorem \ref{Thm : Green's Theorem with specular derivatives} and \eqref{Thm: IVP nonhomo wave eq. w/ spd in 1-dim - 2}, we find that 
  \begin{align*}
    \iint_{\bigtriangleup} \left( \frac{\partial u_x}{\partial^S x} - \frac{\partial u_t}{\partial^S t} \right) ~dx dt
    &= \int_{\partial\bigtriangleup} u_x ~dt + u_t ~dx \\
    &= \int_{\Gamma_0} u_x ~dt + u_t ~dx + \int_{\Gamma_1} u_x ~dt + u_t ~dx + \int_{\Gamma_2} u_x ~dt + u_t ~dx \\
    &= \int_{\Gamma_0} u_x ~dt + u_t ~dx - \left( \int_{\Gamma_1} u_x ~dx + u_t ~dt \right)+ \int_{\Gamma_2} u_x ~dx + u_t ~dt\\
    &= \int_{x_0 - t_0}^{x_0 + t_0} u_t(x, 0)~dx  -\int_{\Gamma_1}~du +  \int_{\Gamma_2}~du \\
    &= \int_{x_0 - t_0}^{x_0 + t_0} u_t(x, 0)~dx  - \left[ u(x_0, t_0) - u(x_0 + t_0, 0) \right] + \left[ u(x_0 - t_0, 0) - u(x_0, t_0) \right] \\
    &= \int_{x_0 - t_0}^{x_0 + t_0} \psi(x)~dx - \left[ u(x_0, t_0) - \varphi(x_0 + t_0) \right] + \left[ \varphi(x_0 - t_0) - u(x_0, t_0) \right] \\
    &= -2u(x_0, t_0) + \varphi(x_0 + t_0) + \varphi(x_0 - t_0) + \int_{x_0 - t_0}^{x_0 + t_0} \psi(x)~dx,
  \end{align*}
  which implies that 
  \begin{equation*} 
    u(x_0, t_0) = \frac{1}{2}\left[\varphi(x_0 + t_0) + \varphi(x_0 - t_0) \right]+ \frac{1}{2}\int_{x_0 - t_0}^{x_0 + t_0} \psi(x)~dx - \frac{1}{2}\iint_{\bigtriangleup} \left( \frac{\partial u_x}{\partial^S x} - \frac{\partial u_t}{\partial^S t} \right) ~dx dt .
  \end{equation*}
  Combining this result with \eqref{Thm: IVP nonhomo wave eq. w/ spd in 1-dim - 1}, we have 
  \begin{align*}
    u(x_0, t_0)  
    &= \frac{1}{2}\left[\varphi(x_0 + t_0) + \varphi(x_0 - t_0) \right] + \frac{1}{2}\int_{x_0 - t_0}^{x_0 + t_0} \psi(x)~dx + \frac{1}{2}\iint_{\bigtriangleup} f(x, t) ~dx dt \\
    &= \frac{1}{2}\left[\varphi(x_0 + t_0) + \varphi(x_0 - t_0) \right] + \frac{1}{2}\int_{x_0 - t_0}^{x_0 + t_0} \psi(x)~dx + \frac{1}{2}\int_{0}^{t}\int_{x_0-(t_0 - t)}^{x_0 + (t_0 - t)} f(x, t) ~dx dt,
  \end{align*}
  as required.
\end{proof}

\begin{remark}
  Note that the hypothesis \ref{(H)} can be dropped in solving the homogeneous wave equation with specular derivatives, which is more general case than \eqref{PDE: IVP homo wave eq. w/ spd in 1-dim}. 
  In other words, the solution \eqref{Sol : IVP nonhomo wave eq. w/ spd} also solve the equation \eqref{PDE: IVP homo wave eq. w/ spd in 1-dim} without \ref{(H)}.
\end{remark}

Unfortunately, the existence of the solution of the initial value problem for the nonhomogeneous wave equation with specular derivatives on $\mathbb{R}$ is not guaranteed.
Before we provide a counterexample, we find a suitable force which is in $S^0$-class.
For a constant $\lambda \in \mathbb{R}$, the \emph{exponential linear unit} $\operatorname{ELU}_{\lambda}:\mathbb{R} \to \mathbb{R}$ is defined by
\begin{equation} \label{Fnc : ELU}
  \operatorname{ELU}_{\lambda}(x) =
\begin{cases} 
x & \text{if } x \geq 0,\\ 
\lambda (e^x - 1) & \text{if } x < 0,
\end{cases}
\end{equation}
which is in $S^1(\mathbb{R})$ if $\lambda \neq 1$ and is in $S^2(\mathbb{R})$ if $\lambda = 1$.
We are interested in the case $\lambda = 1$; in this case, we have 
\begin{equation*} 
  \operatorname{(ELU_1)}^{\spd\spd}(x) =
  \begin{cases} 
  0 & \text{if } x > 0,\\ 
  A(0, 1) & \text{if } x = 0,\\ 
  e^x & \text{if } x < 0,
  \end{cases}
\end{equation*}
which is in $S^0(\mathbb{R})$.

Now, we provide a counterexample showing not that \eqref{PDE: IVP nonhomo wave eq. w/ spd in 1-dim} always has a solution.

\begin{example}
  Consider 
  \begin{equation} \label{Ex: IVP nonhomo wave eq. w/ spd in 1-dim}
    \begin{cases} 
      \displaystyle
      \partial^S_t u_t - \partial^S_x u_x = f(x, t), & (x, t) \in \mathbb{R} \times (0, \infty),\\ 
      u(x, 0) = \varphi(x), & x \in \mathbb{R},\\ 
      u_t(x, 0) = 0, & x \in \mathbb{R} ,
    \end{cases}
  \end{equation}
  where $f$ is defined by 
  \begin{equation} \label{Ex: IVP nonhomo wave eq. w/ spd in 1-dim - 1}
    f(x, t) =
    \begin{cases} 
    -1 & \text{if } x-t > 0, x+t > 0,\\ 
    A(-1, 0) & \text{if } x-t = 0, x+t > 0,\\ 
    0 & \text{if } x-t < 0, x+t > 0,\\ 
    A(0, 1) & \text{if } x-t < 0, x+t = 0,\\ 
    1 & \text{if } x-t < 0, x+t < 0,
    \end{cases}
  \end{equation}
  and $\varphi$ is defined by 
  \begin{equation*}
    \varphi(x) =
    \begin{cases} 
      \displaystyle \frac{1}{2}x^2 + 2x & \text{if } x \geq 0,\\[0.2cm] 
      \displaystyle 2e^x -\frac{1}{2}x^2 - 2 & \text{if } x < 0.
    \end{cases}
  \end{equation*}

  For convenience, decompose the domain $\mathbb{R}^{n} \times (0, \infty)$ as follows:
  \begin{align*}
    \Omega_1 & := \left\{ (x, t) : x-t > 0, x+t > 0 \right\}, \\
    \Gamma_1 & := \left\{ (x, t) : x-t = 0, x+t > 0 \right\}, \\
    \Omega_2 & := \left\{ (x, t) : x-t < 0, x+t > 0 \right\}, \\
    \Gamma_2 & := \left\{ (x, t) : x-t < 0, x+t = 0 \right\}, \\
    \Omega_3 & := \left\{ (x, t) : x-t < 0, x+t < 0 \right\},
  \end{align*}
  which are illustrated as in Figure \ref{Fig : The decomposition of R X (0, infty)}.

  \begin{figure}[H] 
  \centering 
  \includegraphics[scale=1]{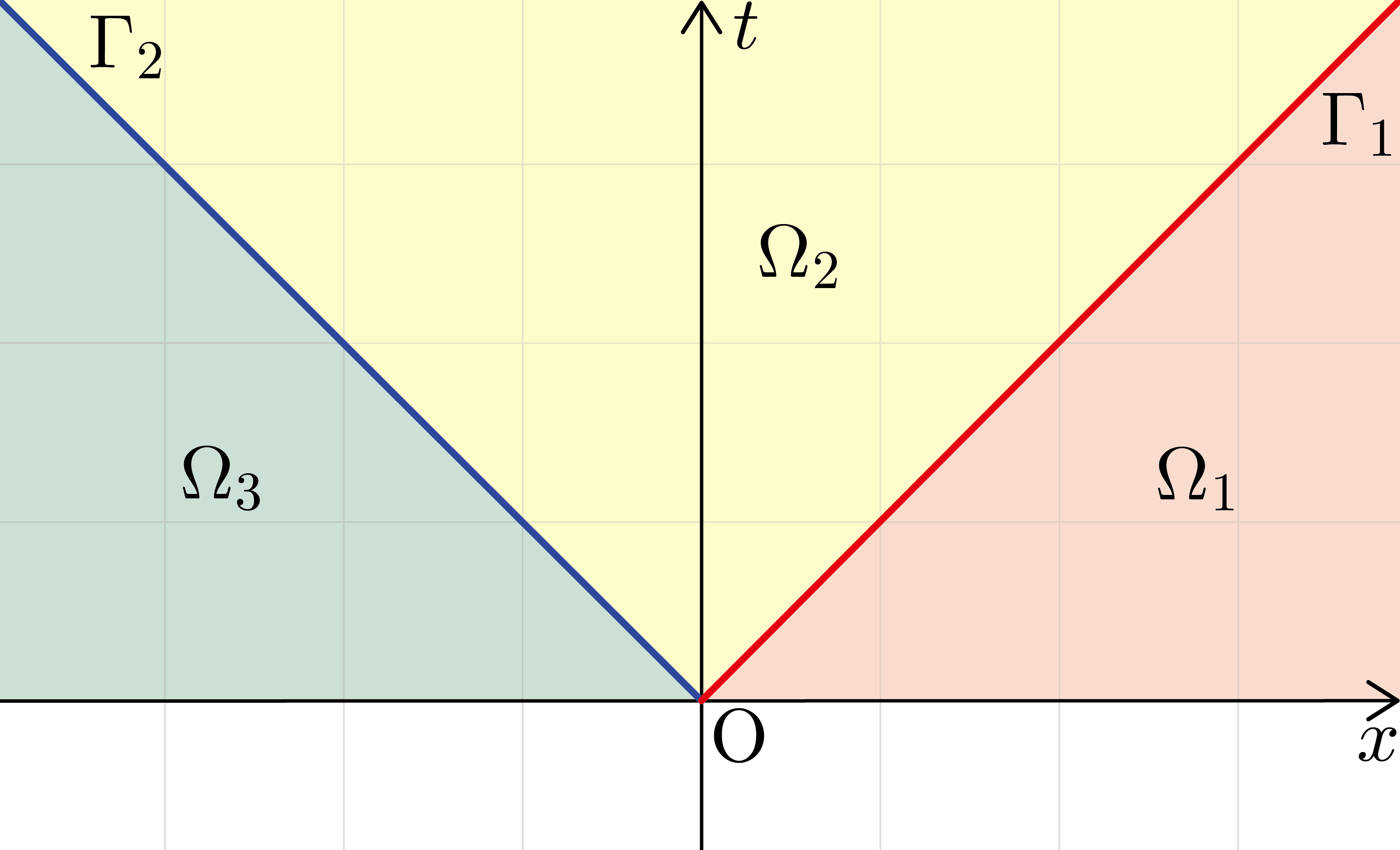} 
  \caption{Decomposition of $\mathbb{R} \times (0, \infty)$}
  \label{Fig : The decomposition of R X (0, infty)}
  \end{figure}

  First, we calculate the solution $u$ by applying the formula \eqref{Sol : IVP nonhomo wave eq. w/ spd}.
  The first term of \eqref{Sol : IVP nonhomo wave eq. w/ spd} can be reduced to 
  \begin{align*}
    \frac{1}{2} [\varphi(x + t) + \varphi(x - t)] 
    &=   
    \begin{cases} 
      \displaystyle \frac{1}{2}x^2 + \frac{1}{2}t^2 + 2x  & \text{if } (x, t) \in \Omega_1 \cup \Gamma_1,\\[0.2cm] 
      \displaystyle e^{x - t} + xt + x + t - 1 & \text{if } (x, t) \in \Omega_2 \cup \Gamma_2,\\[0.2cm] 
      \displaystyle e^{x + t} + e^{x - t} - \frac{1}{2}x^2 - \frac{1}{2}t^2 - 2 & \text{if } (x, t) \in \Omega_3, 
    \end{cases}
  \end{align*}
  and the second term of \eqref{Sol : IVP nonhomo wave eq. w/ spd} is reduced to zero.
  One can calculate the last term of \eqref{Sol : IVP nonhomo wave eq. w/ spd}, resulting in
  \begin{align*}
    \frac{1}{2} \int_{0}^{t} \int_{x-t+s}^{x+t-s} f(y, s) ~dy ds = 
    \begin{cases} 
      \displaystyle -\frac{1}{2}t^2 & \text{if } (x, t) \in \Omega_1 \cup \Gamma_1,\\[0.2cm]
      0 & \text{if } (x, t) \in \Omega_2 \cup \Gamma_2,\\[0.2cm]
      \displaystyle \frac{1}{2}t^2 & \text{if } (x, t) \in \Omega_3.
    \end{cases}
  \end{align*}
  Summing these results, we obtain that 
  \begin{equation*} 
    u(x, t) = 
    \begin{cases} 
      \displaystyle \frac{1}{2}x^2 + 2x & \text{if } (x, t) \in \Omega_1 \cup \Gamma_1,\\[0.2cm]
      e^{x - t} + xt + x + t - 1 & \text{if } (x, t) \in \Omega_2 \cup \Gamma_2,\\ 
      \displaystyle e^{x + t} + e^{x - t} -\frac{1}{2}x^2 - 2 & \text{if } (x, t) \in \Omega_3,
    \end{cases}
  \end{equation*}
  which can be represented as
  \begin{equation*}
    u(x, t) = \frac{1}{2}g(x, t) + \operatorname{ELU}_1(x + t) + \operatorname{ELU}_1(x - t),
  \end{equation*}
  where the function $g$ is defined by 
  \begin{equation*}
   g(x, t)=
  \begin{cases} 
   x^2 & \text{if } (x, t) \in \Omega_1 \cup \Gamma_1,\\
   2xt & \text{if } (x, t) \in \Omega_2 \cup \Gamma_2,\\
   -x^2 & \text{if } (x, t) \in \Omega_3,
  \end{cases}
  \end{equation*}
  and $\operatorname{ELU}_1$ is defined as in \eqref{Fnc : ELU}.
  It is clear that $u$ satisfies the initial condition.

  Second, $u$ solves \eqref{Ex: IVP nonhomo wave eq. w/ spd in 1-dim}.
  Indeed, we find that 
  \begin{align*}
    u_x(x, t) 
    &=
    \begin{cases} 
      \displaystyle x + 2
      & \text{if } (x, t) \in \Omega_1 \cup \Gamma_1,\\
      \displaystyle e^{x - t} + t + 1
      & \text{if } (x, t) \in \Omega_2 \cup \Gamma_2,\\ 
      \displaystyle e^{x + t} + e^{x - t} -x
      & \text{if } (x, t) \in \Omega_3,
    \end{cases}
  \end{align*}
  and 
  \begin{align*}
    u_t(x, t) 
    &=
    \begin{cases} 
      \displaystyle 0
      & \text{if } (x, t) \in \Omega_1 \cup \Gamma_1,\\
      \displaystyle -e^{x - t} + x + 1
      & \text{if } (x, t) \in \Omega_2 \cup \Gamma_2,\\ 
      \displaystyle e^{x + t} -e^{x - t}
      & \text{if } (x, t) \in \Omega_3.
    \end{cases}
  \end{align*}
  Obviously, $u_t(x, 0)$ meets the initial condition.
  Also, we find that 
  \begin{align*}
    \partial^S_tu_x(x, t) &=
    \begin{cases} 
      0 & \text{if } (x, t) \in \Omega_1 \cup \Gamma_1,\\
      -e^{x-t} + 1 & \text{if } (x, t) \in \Omega_2 \cup \Gamma_2,\\ 
      e^{x + t} -e^{x - t} & \text{if } (x, t) \in \Omega_3
    \end{cases}\\
    &= \partial^S_xu_t(x, t),
  \end{align*}
  which implies that $\partial^S_tu_x(x, t)$ and $\partial^S_xu_t(x, t)$ are continuous in $\mathbb{R} \times (0, \infty)$.
  To check $\partial^S_t u_t - \partial^S_x u_x = f(x, t)$, we calculate that 
  \begin{align*}
    \partial^S_t u_t - \partial^S_x u_x &=
    \begin{cases} 
      0 - 1 & \text{if } (x, t) \in \Omega_1,\\
      A(-1, 0) & \text{if } (x, t) \in \Gamma_1,\\
      e^{x - t} - e^{x - t} & \text{if } (x, t) \in \Omega_2,\\ 
      A(0, 1) & \text{if } (x, t) \in \Gamma_2,\\ 
      \left(e^{x + t} + e^{x - t} \right) - \left(e^{x + t} + e^{x - t} - 1 \right) & \text{if } (x, t) \in \Omega_3,
    \end{cases}
  \end{align*} 
  which is equal with the given function $f$ as in \eqref{Ex: IVP nonhomo wave eq. w/ spd in 1-dim - 1}.

  However, $u$ has discontinuity on $\Gamma_1 \cup \Gamma_2$ so that $u$ is not in $S^2$-class, even if $u$ solves \eqref{Ex: IVP nonhomo wave eq. w/ spd in 1-dim} and meets the initial conditions.
  For example, if $x = t$, then 
  \begin{equation*} 
    u(x, t) = 
    \begin{cases} 
      \displaystyle \frac{1}{2}x^2 + 2x & \text{if } (x, t) \in \Omega_1 \cup \Gamma_1,\\[0.2cm]
      x^2 + 2x & \text{if } (x, t) \in \Omega_2 \cup \Gamma_2,\\ 
      \displaystyle e^{2x} -\frac{1}{2}x^2 - 1 & \text{if } (x, t) \in \Omega_3,
    \end{cases}
  \end{equation*}
  which implies that $u$ has discontinuity on $\Gamma_1$.
\end{example}

\bibliographystyle{abbrv}
\bibliography{Bibliography/The_wave_equation_with_specular_derivatives}{}

\begin{thebibliography}{10}

\bibitem{1967_Aull}
C.~E. Aull.
\newblock The first symmetric derivative.
\newblock {\em Amer. Math. Monthly}, 74:708--711, 1967.

\bibitem{2011_Bartle_BOOK}
R.~G. Bartle and D.~R. Sherbert.
\newblock {\em Introduction to Real Analysis}.
\newblock John Wiley \& Sons, Inc., New York, 4th edition, 2011.

\bibitem{2013_Bressan}
A.~Bressan.
\newblock {\em Lecture notes on functional analysis}, volume 143 of {\em
  Graduate Studies in Mathematics}.
\newblock American Mathematical Society, Providence, RI, 2013.

\bibitem{1994_Bruckner}
A.~M. Bruckner.
\newblock {\em Differentiation of real functions}, volume~5 of {\em CRM
  Monograph Series}.
\newblock American Mathematical Society, Providence, RI, 2nd edition, 1994.

\bibitem{1966_Bruckner}
A.~M. Bruckner and J.~L. Leonard.
\newblock Derivatives.
\newblock {\em Amer. Math. Monthly}, 73:24--56, 1966.

\bibitem{2012_Colley_BOOK}
S.~J. Colley.
\newblock {\em Vector calculus}.
\newblock Pearson, Boston, 4th edition, 2012.

\bibitem{2010_Evans}
L.~C. Evans.
\newblock {\em Partial differential equations}, volume~19 of {\em Graduate
  Studies in Mathematics}.
\newblock American Mathematical Society, Providence, RI, 2nd edition, 2010.

\bibitem{2011_Han_BOOK}
Q.~Han.
\newblock {\em A basic course in partial differential equations}, volume 120 of
  {\em Graduate Studies in Mathematics}.
\newblock American Mathematical Society, Providence, RI, 2011.

\bibitem{2022_Jung}
K.~Jung and J.~Oh.
\newblock The specular derivative.
\newblock {\em arXiv preprint 2210.06062}, 2022.

\bibitem{1983_Larson}
L.~Larson.
\newblock The symmetric derivative.
\newblock {\em Trans. Amer. Math. Soc.}, 277(2):589--599, 1983.

\bibitem{2011_Sahoo}
P.~K. Sahoo.
\newblock Quasi-mean value theorems for symmetrically differentiable functions.
\newblock {\em Tamsui Oxf. J. Inf. Math. Sci.}, 27(3):279--301, 2011.

\bibitem{1998_Sahoo}
P.~K. Sahoo and T.~Riedel.
\newblock {\em Mean value theorems and functional equations}.
\newblock World Scientific Publishing Co., Inc., River Edge, NJ, 1998.

\end{thebibliography}

\end{document}